\def\r{\rangle}
\def\f12{\frac 1 2}
\def\a{\alpha}
\def\b{\beta}
\def\ga{\gamma}
\def\Lb{\underline{L}}
\def\pa{\partial}
\def\les{\lesssim}
\def\R{\mathbb{R}}
\def\L{L}
\def\Lu{\underline{L}}
\newtheorem{thm}{Theorem}
\newtheorem{Prop}{Proposition}[section]
\newtheorem{Lem}{Lemma}[section]
\newtheorem{Remark}{Remark}[section]
\newtheorem{remark}{Remark}
\begin{document}

\title{Asymptotic decay for defocusing semilinear wave equations in $\mathbb{R}^{1+1}$}
\author{Dongyi Wei \and Shiwu Yang}

\AtEndDocument{ {\footnotesize%
  \addvspace{\medskipamount}
  \textsc{School of Mathematical Sciences, Peking University, Beijing, China} \par
  \textit{E-mail address}: \texttt{jnwdyi@pku.edu.cn} \par

  \addvspace{\medskipamount}
  \textsc{Beijing International Center for Mathematical Research, Peking University, Beijing, China} \par
  \textit{E-mail address}: \texttt{shiwuyang@bicmr.pku.edu.cn}
  }}

\date{}

\maketitle

\begin{abstract}
This paper is devoted to the study of  asymptotic behaviors of solutions to the one-dimensional defocusing semilinear wave equation.
We prove that finite energy solution tends to zero in the pointwise sense, hence improving the averaged decay of Lindblad and Tao \cite{tao12:1d:NLW}. Moreover, for sufficiently localized data belonging to some weighted energy space, the solution decays in time with an inverse polynomial rate. This confirms a conjecture raised in the mentioned work.

The results are based on new weighted vector fields as multipliers applied to regions enclosed by light rays. The key observation for the first result is an integrated local energy decay for the potential energy, while the second result relies on a type of weighted Gagliardo-Nirenberg inequality.



\end{abstract}

\section{Introduction}
In this paper, we study the global asymptotic behaviors for solutions of  the following defocusing semilinear wave equation
\begin{equation}
  \label{eq:NLW:semi:1d}
  \Box\phi=-\partial_t^2\phi + \partial_x^2\phi=|\phi|^{p-1}\phi,\quad \phi(0, x)=\phi_0(x),\quad \pa_t\phi(0, x)=\phi_1(x)
\end{equation}
in $\mathbb{R}^{1+1}$ with $p>1$. Due to the dispersive nature of the equation, the associated energy is coercive. Hence the local in time solution in energy space can be extended to the whole spacetime. However nothing too much can be said regarding the global dynamics  except that the solution is uniformly bounded.

In the beautiful work \cite{tao12:1d:NLW} by Lindblad and Tao, they showed that finite energy solution enjoys the average decay estimate
$$\lim\limits_{T\to+\infty}\frac{1}{T}\int_0^T\|\phi(t)\|_{L_x^{\infty}(\R)}dt=0.$$
A direct and important consequence of this result is that the solution asymptotically does not approach to any linear wave. This is in vast contrast to the situation in higher dimensions, where it has been shown that solutions to the energy subcritical defocusing semilinear wave equations behave like linear waves in certain sense for sufficiently large $p$. For such results, we refer to \cite{yang:NLW:2D}, \cite{yang:scattering:NLW}, \cite{yang:NLW:ptdecay:3D} and references therein.

The key reason underlying this is that higher dimensional free wave decays in time while one-dimensional wave does not. In particular, at least perturbative method can be used to study the asymptotic behaviors for solutions of the nonlinear wave equation \eqref{eq:NLW:semi:1d} in higher dimensions. For the one dimensional case studied in this paper, the above averaged decay estimate can only tell us that the solution does not scatter to linear wave. But it remains unknown what is the solution scatters to  even with small initial data.
Motivated by this, the aim of this paper is to give quantitative asymptotic decay properties for solutions of the nonlinear wave equations \eqref{eq:NLW:semi:1d}.

To state our main results, define the weighted energy norm of the initial data
\begin{equation}
\label{eq:ID:NW:1d}
\mathcal{E}_{\ga}[\phi]= \int_{\R}(1+|x|)^{\ga}(|\partial_x\phi_0|^2+|\phi_1|^2+\frac{2}{p+1}|\phi_0|^{p+1})dx,\quad \forall \gamma\geq 0.
\end{equation}
Our first result is to show that for finite energy solution, the potential energy decays to zero. As a consequence, the solution also decays to zero in the pointwise sense.
\begin{thm}
\label{thm:potential:decay}
Consider the Cauchy problem to the defocusing semilinear wave equation \eqref{eq:NLW:semi:1d} in $\mathbb{R}^{1+1}$ with finite energy data, that is, $\mathcal{E}_{0}[\phi]<+\infty. $ Then the solution $\phi$ is globally in time and verifies the following decay properties:
  $$\lim_{t\to\infty}\|\phi(t)\|_{L_x^{p+1}(\R)}=0,\quad\lim_{t\to\infty}\|\phi(t)\|_{L_x^{\infty}(\R)}=0.$$
  \end{thm}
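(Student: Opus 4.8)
The plan is to pass to null coordinates $u=t-x$, $v=t+x$, under which $\Box=-4\partial_u\partial_v$, so that \eqref{eq:NLW:semi:1d} becomes $\partial_u\partial_v\phi=-\frac14|\phi|^{p-1}\phi$. Writing $P=\frac{1}{p+1}|\phi|^{p+1}$ for the potential energy density and $m$ for the Minkowski metric, the energy--momentum tensor $T_{\mu\nu}=\partial_\mu\phi\,\partial_\nu\phi-m_{\mu\nu}(\frac12\partial^\alpha\phi\,\partial_\alpha\phi+P)$ has in the null frame the components $T_{uu}=(\partial_u\phi)^2$, $T_{vv}=(\partial_v\phi)^2$, $T_{uv}=\frac12P$, and trace $m^{\mu\nu}T_{\mu\nu}=-2P$. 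This is the structural feature special to $\mathbb{R}^{1+1}$ that I would exploit: the trace of $T$ is, up to a constant, the potential energy density itself, so any conformal multiplier produces a bulk term proportional to $\iint P$. One checks directly that $T$ is divergence free on solutions, $\nabla^\mu T_{\mu\nu}=(\Box\phi-|\phi|^{p-1}\phi)\partial_\nu\phi=0$, which is what keeps the following identities clean, with no lower order Hardy-type correction needed. The reduced equation also gives the exact transport identities $\partial_v(\partial_u\phi)^2=-\frac12\partial_uP$ and $\partial_u(\partial_v\phi)^2=-\frac12\partial_vP$, which I will use later.

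I would then apply the weighted multiplier $X=f(u)\partial_u+g(v)\partial_v$, with $f,g$ bounded and increasing (e.g. $f'(u)=g'(u)=(1+u^2)^{-1}$). Since $\partial_u,\partial_v$ are null, the deformation tensor of $X$ has only a mixed component, $(\frac12\mathcal{L}_Xm)_{uv}=-\frac14(f'(u)+g'(v))$, and contracting with $T$ yields $T^{\mu\nu}(\frac12\mathcal{L}_Xm)_{\mu\nu}=-P(f'(u)+g'(v))$. The flux through the slice $\{t=T\}$ is
\[
\mathcal{G}(T)=\int_{\R}\Big(f(u)(\partial_u\phi)^2+g(v)(\partial_v\phi)^2+\tfrac12\big(f(u)+g(v)\big)P\Big)\,dx,
\]
and the divergence identity gives $\frac{d}{dT}\mathcal{G}(T)=-\int_\R P\big(f'(u)+g'(v)\big)\,dx\le0$. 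Hence $\mathcal{G}$ is nonincreasing and nonnegative, so it converges, and integrating in time produces the integrated local energy decay for the potential energy,
\[
\int_0^\infty\!\!\int_\R P(t,x)\big(f'(t-x)+g'(t+x)\big)\,dx\,dt=\mathcal{G}(0)-\lim_{T\to\infty}\mathcal{G}(T)\lesssim \mathcal{E}_0[\phi].
\]

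The delicate point --- and the step I expect to be the main obstacle --- is upgrading this space--time bound to the genuine limit $\lim_{t\to\infty}\int_\R P\,dx=0$. The weight $f'(t-x)+g'(t+x)$ is comparable to a constant only near the light rays $|x|\approx t$ and degenerates like $t^{-2}$ in the interior region $|x|\ll t$, so the bound above does not by itself exclude potential energy concentrating along the center $x/t\to0$. Moreover $\frac{d}{dt}\int_\R P\,dx=\int_\R|\phi|^{p-1}\phi\,\partial_t\phi\,dx$ is not controlled by the energy alone, so one cannot simply invoke monotonicity or uniform continuity of $t\mapsto\int_\R P\,dx$. To close this gap I would combine the estimate above with the monotonicity of the null fluxes $E_v(v)=\int(\partial_u\phi)^2\,du$ and $E_u(u)=\int(\partial_v\phi)^2\,dv$ (nondecreasing, with total variation bounded by the data) together with the transport identities: integrating these over a central characteristic rectangle ties the interior potential contribution to differences of null fluxes through the rectangle's boundary, which converge, thereby forcing the central contribution to vanish. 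Ruling out this interior concentration is the crux of the argument.

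Finally, the pointwise statement follows from the $L^{p+1}$ decay by interpolation. The conserved energy bounds $\|\partial_x\phi(t)\|_{L^2}\lesssim\mathcal{E}_0[\phi]^{1/2}$ uniformly in $t$, while the one-dimensional Gagliardo--Nirenberg inequality $\|\psi\|_{L^\infty}^{(p+3)/2}\lesssim\|\psi\|_{L^{p+1}}^{(p+1)/2}\|\psi'\|_{L^2}$, obtained from $|\psi(x)|^{(p+3)/2}\lesssim\int|\psi|^{(p+1)/2}|\psi'|$ and Hölder's inequality, gives
\[
\|\phi(t)\|_{L^\infty}\lesssim\|\phi(t)\|_{L^{p+1}}^{\frac{p+1}{p+3}}\,\|\partial_x\phi(t)\|_{L^2}^{\frac{2}{p+3}}\longrightarrow0,
\]
since the first factor tends to $0$ and the second stays bounded. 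Global existence is not an issue: the energy is coercive and in one dimension $H^1\hookrightarrow L^\infty$, so the local solution extends to all times.
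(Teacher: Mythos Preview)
Your setup and the Gagliardo--Nirenberg step are correct, and you have pinpointed the real difficulty: ruling out concentration of $P$ in the interior $|x|\ll t$. The gap is that your proposed resolution does not close it. Integrating the transport identities over a characteristic rectangle $[u_1,u_2]\times[v_1,v_2]$ only yields relations such as $\int_{u_1}^{u_2}\big[(\partial_u\phi)^2(u,v_2)-(\partial_u\phi)^2(u,v_1)\big]\,du=-\tfrac12\int_{v_1}^{v_2}\big[P(u_2,v)-P(u_1,v)\big]\,dv$, tying flux \emph{increments} to differences of $P$ along two opposite null segments, never to the spacetime integral $\iint P$ over the rectangle. Convergence of the null fluxes therefore does not force the interior potential to vanish. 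Worse, your weight $f'(u)+g'(v)\sim t^{-2}$ on $\{|x|\le(1-\epsilon)t\}$ is too weak even for averaged decay: the bound $\int_1^\infty t^{-2}\!\int_{|x|\le(1-\epsilon)t}P\,dx\,dt<\infty$ together with $\int_{\R}P(t,\cdot)\,dx\le\mathcal{E}_0[\phi]$ is consistent with $\int_{|x|\le(1-\epsilon)t}P(t,x)\,dx\equiv c>0$, so one cannot conclude $\tfrac1T\int_0^T\!\int P\,dx\,dt\to0$ from your estimate.

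The paper supplies two ingredients you are missing. First, a different multiplier---essentially $(1+x^2t_*^{-2})\partial_t+2xt_*^{-1}\partial_x$ with $t_*=t+1$---produces an integrated estimate with weight $uv\,t_*^{-3}\sim t^{-1}$ in the interior; this is just strong enough (combined with the standard null fluxes near the cone) to yield the averaged decay $\tfrac1T\int_0^T\!\int_{|x|\le t+R}P\,dx\,dt\to0$. Second, and decisively, the \emph{scaling} vector field $(t+R)\partial_t+x\partial_x$ on the truncated cone $\{|x|\le t+R,\ 0\le t\le T\}$ has bulk term exactly $\tfrac{2}{p+1}\iint|\phi|^{p+1}$ with \emph{no} weight; the resulting identity, after dividing by $T$ and using the averaged decay, gives $\limsup_{T\to\infty}\int_{\R}P(T,x)\,dx\lesssim\int_{|x|\ge R}\mathbb{T}_{00}(0,x)\,dx\to0$ as $R\to\infty$. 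Without a multiplier whose deformation trace produces an unweighted bulk potential term, there is no mechanism in your outline to pass from space--time control to a fixed-time limit.
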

Due to the energy conservation, the standard Gagliardo-Nirenberg inequality shows that the pointwise decay is a consequence of the potential energy decay.
 This is partly inspired by the related works \cite{Pecher82:NLW:2d}, \cite{Pecher82:decay:3d}, \cite{Velo87:decay:NLW} in higher dimensions, in which time
 decay of the potential energy is the first step toward the asymptotic behaviors of the solutions.

However the above improved decay estimates are still far from precisely describing the asymptotics of the solution. As conjectured by Lindblad and Tao in
\cite{tao12:1d:NLW}, the solution should decay in time at a polynomial rate instead of merely qualitative decay estimates. We now give this conjecture an
affirmative answer.
\begin{thm}
\label{thm:polynomial:decay}
Consider the Cauchy problem to the defocusing semilinear wave equation \eqref{eq:NLW:semi:1d}. For initial data $(\phi_0,\phi_1)$ bounded in $\mathcal{E}_{1}[\phi],$ the solution $\phi$ satisfies the following decay estimates:
\item
\begin{itemize}
\item In the exterior region when $|x|\geq |t|$, we have
\begin{align*}
  &|\phi(t,x)|\leq C(1+|t|+|x|)^{-\frac{1}{p+3}}(1+|x|-|t|)^{-\frac{1}{p+3}}.
\end{align*}
\item In the interior region $|x|< |t|$, the solution verifies the decay estimate
\begin{align*}
  &|\phi(t,x)|\leq C(1+|t|+|x|)^{\frac{1-2\a}{p+3}}(1+|t|-|x|)^{\frac{1-2\b}{p+3}}
\end{align*}
for all constants $\a$, $\b$ such that
$$  \left(\frac{1}{\a}-1\right)\left(\frac{1}{\b}-1\right)=\frac{4}{(p+1)^2}, \quad \frac{1}{2}\leq \a<1.$$
\end{itemize}
In particular, we have the uniform time decay of the solution
$$ \|\phi(t, x)\|_{L_x^{\infty}(\R)}\leq C(1+|t|)^{-\frac{p-1}{(p+1)^2+4}}.$$ Here $C$ depends only on $p$, $\a$, $\b$ and the weighted energy $\mathcal{E}_{1}[\phi].$
\end{thm}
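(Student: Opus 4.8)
The plan is to pass to null coordinates $u=t-x$, $v=t+x$, in which $\Box=-4\pa_u\pa_v$ and \eqref{eq:NLW:semi:1d} becomes $\pa_u\pa_v\phi=-\tfrac14|\phi|^{p-1}\phi$. Differentiating the squared null derivatives and using the equation yields the two pointwise conservation laws
\begin{equation*}
\pa_u\big((\pa_v\phi)^2\big)+\tfrac{1}{2(p+1)}\pa_v\big(|\phi|^{p+1}\big)=0,\qquad \pa_v\big((\pa_u\phi)^2\big)+\tfrac{1}{2(p+1)}\pa_u\big(|\phi|^{p+1}\big)=0,
\end{equation*}
which are the engine of every estimate below. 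On the initial slice $t=0$ one has $u=-x$, $v=x$, so the weight $(1+|x|)$ in $\mathcal{E}_1[\phi]$ is comparable to $(1+|u|)$ and to $(1+|v|)$; thus $\mathcal{E}_1[\phi]$ is a first-order weighted energy adapted to these coordinates.

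First I would derive a family of weighted energy estimates by multiplying the two conservation laws by monomial weights $(1+|u|)^{a}$ and $(1+|v|)^{b}$ and integrating over the characteristic rectangles and triangles bounded by the light rays $\{u=\text{const}\}$, $\{v=\text{const}\}$ that enclose a given spacetime point. Integration by parts converts these into boundary fluxes --- weighted integrals of $(\pa_v\phi)^2$ and of $(\pa_u\phi)^2$ along outgoing and ingoing rays, together with weighted potential energy along the transverse edges --- plus bulk terms of the type $\int(1+|u|)^{a-1}(\pa_v\phi)^2$ and $\int(1+|v|)^{b-1}|\phi|^{p+1}$ generated by differentiating the weights. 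The point is that the defocusing sign of the nonlinearity (already responsible for coercivity and for Theorem \ref{thm:potential:decay}) makes these bulk terms favorable or absorbable, so that the whole hierarchy closes and is controlled by $\mathcal{E}_1[\phi]$. The exponents $a,b$ remain free at this stage and will ultimately become the parameters $\a,\b$.

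Next I would convert the weighted energies into pointwise bounds via a weighted Gagliardo--Nirenberg inequality. Since $\pa\big(|\phi|^{\frac{p+3}{2}}\big)=\tfrac{p+3}{2}|\phi|^{\frac{p-1}{2}}\phi\,\pa\phi$, integrating along the null ray through $(t,x)$ out to spatial infinity (where the solution decays) and applying Cauchy--Schwarz with a weight $w$ gives
\begin{equation*}
|\phi(t,x)|^{\frac{p+3}{2}}\leq \tfrac{p+3}{2}\Big(\int w\,|\phi|^{p+1}\Big)^{1/2}\Big(\int w^{-1}|\pa\phi|^2\Big)^{1/2}.
\end{equation*}
Choosing $w$ to match the two weighted quantities controlled in the previous step and pulling the value of the weight at $(t,x)$ out of the integrals produces $|\phi(t,x)|\lesssim (1+t+|x|)^{-A}\big(1+\big|\,|x|-|t|\,\big|\big)^{-B}$ with exponents $A,B$ read off from $a$ and $b$. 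In the exterior region the weight $(1+|x|-|t|)$ is uniformly favorable and forces $a=b$ matched to the power $1$ in $\mathcal{E}_1$, yielding the rate $\tfrac{1}{p+3}$ in each factor; in the interior the same computation leaves the one--parameter freedom in $(a,b)$, which after relabeling is exactly the constraint $(\tfrac1\a-1)(\tfrac1\b-1)=\tfrac{4}{(p+1)^2}$, the product $\tfrac{4}{(p+1)^2}$ reflecting the balance between the $|\phi|^{p+1}$ potential term and the quadratic derivative term in the weighted inequality (equivalently, the compatibility of the $u$- and $v$-weight exponents in the two coupled conservation laws).

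Finally, for the uniform $L^\infty_x$ bound I would optimize the interior estimate over the admissible $(\a,\b)$ at each point, writing $|x|=\theta|t|$ and minimizing the exponent. With $k=\tfrac{4}{(p+1)^2}$ the worst depth occurs where $\log(1+|t|-|x|)/\log(1+|t|+|x|)=\tfrac{4k}{(1+k)^2}$, at which the optimal choice is $\a=\tfrac{1}{1+k}\in[\tfrac12,1)$; the resulting rate is $\tfrac{1}{p+3}\cdot\tfrac{1-k}{1+k}=\tfrac{1}{p+3}\cdot\tfrac{(p-1)(p+3)}{(p+1)^2+4}=\tfrac{p-1}{(p+1)^2+4}$, as claimed. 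I expect the main obstacle to be the second step: because one--dimensional free waves do not disperse, none of the decay can come from linear estimates, and the entire gain must be extracted from the weight structure together with the defocusing sign. Designing the multipliers and the light--ray regions so that the weighted bulk terms carry the right sign rather than an uncontrollable one, and so that the resulting weighted fluxes and potential energies are precisely the quantities the weighted Gagliardo--Nirenberg inequality can consume --- with the sharp product relation $\tfrac{4}{(p+1)^2}$ rather than a lossy one --- is the delicate heart of the argument.
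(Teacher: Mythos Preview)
Your overall architecture---null coordinates, weighted multipliers on characteristic rectangles, then a weighted Gagliardo--Nirenberg inequality along a null ray---is exactly the paper's. But there is a genuine gap in the interior step: the bulk potential term is \emph{not} favorable. With the weights you need (positive $u$-exponent $\b$ on the first conservation law, positive $v$-exponent $\a$ on the second, both $<1$), differentiating the weight against $\pa_v(|\phi|^{p+1})$ and $\pa_u(|\phi|^{p+1})$ produces $+\frac{4\b}{p+1}u^{\b-1}v^{\a-1}|\phi|^{p+1}$ and $+\frac{4\a}{p+1}u^{\b-1}v^{\a-1}|\phi|^{p+1}$, both with the \emph{wrong} sign; the defocusing sign is precisely what makes these positive, hence bad. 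So the hierarchy as you describe it does not close.

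The paper's remedy is the device you are missing: set $Q:=\Box(\phi^2)=-2(L\phi)(\Lb\phi)+2|\phi|^{p+1}$ and subtract $\frac{4}{p+1}u^{\b-1}v^{\a-1}Q$ from the divergence identity. This does two things at once. First, the $|\phi|^{p+1}$ bulk cancels completely. Second, the cross term $\frac{8}{p+1}u^{\b-1}v^{\a-1}(L\phi)(\Lb\phi)$ that $Q$ injects combines with the two good derivative bulks $\frac{2(\a-1)}{\b}u^{\b}v^{\a-2}(\Lb\phi)^2$ and $\frac{2(\b-1)}{\a}u^{\b-2}v^{\a}(L\phi)^2$ to form a nonpositive quadratic form in $(L\phi,\Lb\phi)$; the discriminant condition for this is exactly $(\frac{1}{\a}-1)(\frac{1}{\b}-1)=\frac{4}{(p+1)^2}$. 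That is the true origin of the constraint---not a ``balance'' between potential and derivative energies. The subtracted $\int u^{\b-1}v^{\a-1}Q$ is then handled separately: since $Q=\Box(\phi^2)$, two integrations by parts turn it into $\int\phi^2\,\Box(u^{\b-1}v^{\a-1})+\text{boundary}$, and $\Box(u^{\b-1}v^{\a-1})=-4(\b-1)(\a-1)u^{\b-2}v^{\a-2}<0$ together with the uniform bound $|\phi|\les 1$ (from the unweighted energy) makes this harmless. A smaller point: your Gagliardo--Nirenberg step integrates ``out to spatial infinity (where the solution decays),'' but in one dimension that decay is not a priori available; the paper's Lemma~\ref{lem:GN1} gets around this by proving $\inf_{s\ge t} (s+1)^{(\mu_1+\mu_2)/2}|g(s)|^{(p+3)/2}=0$ via contradiction with the finiteness of $\int(s+1)^{\mu_2}|g|^{p+1}$, and it is this version that yields the crucial prefactor $(t+1)^{\mu_1+\mu_2}$.
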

\begin{remark}
The uniform time decay for the solution follows by taking $\b=\f12$. One can also choose
\begin{align*}
\a=\b=\frac{p+1}{p+3}
\end{align*}
to conclude that the solution verifies the following decay estimate
\begin{align*}
|\phi(t, x)|\leq C (1+t+|x|)^{-\frac{p-1}{(p+3)^2}}(1+|t-|x||)^{-\frac{p-1}{(p+3)^2}},\quad \forall t\geq 0.
\end{align*}
This improves the decay rate on the region far away from the light rays.
\end{remark}

\begin{remark}
Although the constant $C$ relies on the power $p$, it is uniformly bounded in the limit $p\rightarrow 1$ in the exterior region. In particular, the decay estimate in the exterior region is sharp in the sense that it is consistent with the decay of solutions to linear Klein-Gordon equation in the limiting case $p=1$.  The reason for this is that the vector field method used for studying wave equation also works for Klein-Gordon equations in the exterior region, see for example \cite{Yang:mMKG}.
\end{remark}

\begin{remark}
It is possible to improve the decay estimates in the interior region. But it seems that there is not too much room for doing this due to the fact that the solution can not decay too fast in view of Theorem \ref{thm:potential:decay}.
\end{remark}

Although there is significant difference for the energy subcritical defocusing semilinear equation in one dimension and higher dimensions, a common feature is that the solution exhibits certain decay properties. The key estimate in the work of Lindblad and Tao is the improved potential energy decay
\begin{align*}
\int_{t_0-T}^{t_0+T}\int_{x_0+vt-R}^{x_0+vt+R}|\phi(t, x)|^{p+1}dxdt\leq C(\sqrt{RT}+R^{-1}T),\quad \forall T\geq R>0
\end{align*}
on parallelogram, derived by using the vector field $v\pa_t+\pa_x$ as multiplier. The averaged decay estimate of the solution then follows by using the classical Rademacher differentiation theorem.

One difficulty to study the asymptotic decay for one dimensional semilinear wave equation is the lack of conformal symmetry. In higher dimensions,
there is a critical power $p=\frac{d+3}{d-1}$ such that the equation is conformally invariant. And the equation can be further classified into subconformal
case and superconformal case. Since higher dimensional linear wave decays in time, the problem becomes easier for larger $p$.
The importance of this conformal symmetry is that it directly leads to the time decay of the potential energy
\begin{align*}
\int_{\mathbb{R}^d}|\phi(t, x)|^{p+1}dx \leq C (1+ t)^{\max\{d+1-p(d-1), -2\}}
\end{align*}
by using the conformal Killing vector field as multiplier,
see \cite{Pecher82:NLW:2d}, \cite{Pecher82:decay:3d}, \cite{Velo87:decay:NLW} and recent improvements \cite{yang:scattering:NLW}, \cite{yang:NLW:2D}. However, this method is only effective when the power $p$ has a lower bound close to the conformal power. It fails for the one dimensional problem studied in this paper when $d=1$ due to the fact that equation \eqref{eq:NLW:semi:1d} is subconformal.

To overcome these difficulties and partly inspired by the work of Lindblad and Tao, we make use of new vector fields as multipliers to obtain weighted energy estimates. The key observation for the proof of Theorem \ref{thm:potential:decay} is the following integrated local energy decay for the potential energy
\begin{align*}
\int_0^{+\infty}\int_{-t-1}^{t+1}\frac{((t+1)^2-x^2)|\phi(t,x)|^{p+1}}{(t+1)^3}dxdt\leq C,
\end{align*}
derived by using the vector field
\[
(1+(1+t)^{-2}x^2)\pa_t+2(1+t)^{-1}x\pa_x
\]
as multiplier. This estimate plays the role that it indicates that the potential energy concentrates mainly in a neighborhood of the light ray $\{t=|x|\}$. On the region close to the light ray, we can bound the potential energy by the standard energy conservation. This immediately leads to the averaged potential energy decay
\begin{align*}
\lim\limits_{T\rightarrow \infty}\frac{1}{T}\int_0^T\int_{|x|\leq t+R}|\phi|^{p+1} dxdt =0,\quad \forall R>0.
\end{align*}
Then by using the translated scaling vector field $(t+R)\pa_t+ x\pa_x$ as multiplier, we can improve the averaged time decay of the potential energy to be uniform time decay, hence concluding Theorem \ref{thm:potential:decay}. We remark here that since the initial energy on the region $\{|x|>R\}$ goes to zero as $R\rightarrow\infty$, in view of finite speed of propagation, the above averaged potential energy decay holds true after taking the limit $R\rightarrow \infty$. By using the standard energy conservation and Gagliardo-Nirenberg inequality, this gives an alternative proof for the averaged decay of Lindblad and Tao.

For the polynomial decay estimates of Theorem \ref{thm:polynomial:decay}, we need to use weighted vector fields as multipliers. The main difficulty lies in the necessity that the method should work for all range of $p$, including the limiting case when $p=1$, where the nonlinear wave equation degenerates to linear Klein-Gordon equation. Inspired by the vector field method in \cite{Yang:mMKG} effective both for the wave equation and Klein-Gordon equation in the exterior region $\{t\leq |x|+1\}$, we show that the Lorentz rotation vector field $x\pa_t+t\pa_x$ is sufficient to derive the necessary weighted energy estimates through light lines in the exterior region. The inverse polynomial decay of the solution in the exterior region then follows by using a type of weighted Gagliardo-Nirenberg inequality (see Lemma \ref{lem:GN1} for details).

The situation inside the light cone $\{|x|\leq t+1\}$ is more involved. We instead use the following class of new vector fields
\begin{align*}
\b^{-1} (1+t-x)^{\b}(1+t+x)^{\a-1}(\pa_t-\pa_x)+\a^{-1} (1+t-x)^{\b-1}(1+t+x)^{\a}(\pa_t+\pa_x)
\end{align*}
as multipliers with positive constants $\a$, $\b$ verifying the assumptions in Theorem \ref{thm:polynomial:decay}. Note that $1<\a+\b<2$. The weights of these vector fields have order between $0$ and $1$. The use of such vector fields with low weights as multipliers also appeared in \cite{yang:NLW:ptdecay:3D:smallp} for the uniform bound of solution to the three dimensional nonlinear wave equation.

\textbf{Acknowledgments.} S. Yang is partially supported by NSFC-11701017.

\section{Preliminary and energy identities}
Recall the energy momentum tensor associated to solution $\phi$ of \eqref{eq:NLW:semi:1d}
\begin{align*}
\mathbb{T}_{\mu\nu}=\pa_\mu \phi\cdot \pa_\nu \phi-\f12 m_{\mu\nu} (\pa^\ga\phi\pa_\ga \phi +\frac{2}{p+1}|\phi|^{p+1})
\end{align*}
with $\pa_1=\pa_x$, $\pa_0=\pa_t$ and $\mu,\nu=0, 1$. Here $m_{\mu\nu}$ is the flat Minkowski metric on $\mathbb{R}^{1+1}$. In particular, we can compute that
\begin{align*}
\mathbb{T}_{01}=\mathbb{T}_{10}=\partial_x\phi\partial_t\phi, \quad
\mathbb{T}_{11}=\frac{|\partial_x\phi|^2}{2}+\frac{|\partial_t\phi|^2}{2}-\frac{|\phi|^{p+1}}{p+1},\quad
\mathbb{T}_{00}=\frac{|\partial_x\phi|^2}{2}+\frac{|\partial_t\phi|^2}{2}+\frac{|\phi|^{p+1}}{p+1}.
\end{align*}
 Define the null coordinates $$u=t+1-x, \quad v=t+1+x $$
  as well as the associated null frame
  $$ \L=\partial_x+\partial_t, \quad \Lu=\partial_t-\partial_x. $$
    In particular we have $$\L u=\Lu v=0, \quad \L v=\Lu u=2.$$ Moreover we can write that
\begin{align}
\label{eq:T11}
  &\mathbb{T}_{00}-\mathbb{T}_{11}=\frac{2|\phi|^{p+1}}{p+1},\quad \mathbb{T}_{00}-\mathbb{T}_{01}=\frac{|\Lu\phi|^2}{2}+\frac{|\phi|^{p+1}}{p+1},\quad  \mathbb{T}_{00}+\mathbb{T}_{01}=\frac{|\L\phi|^2}{2}+\frac{|\phi|^{p+1}}{p+1}.
\end{align}
For solution $\phi$ of the nonlinear wave equation \eqref{eq:NLW:semi:1d}, we observe the conservation laws
 \begin{align}
 \label{eq:T00}
  &\partial_t\mathbb{T}_{00}=\partial_x\mathbb{T}_{01},\quad \partial_t\mathbb{T}_{01}=\partial_x\mathbb{T}_{11}.
\end{align}
Applying the first energy conservation to the region bounded by $\{t=t\}$ and the initial line $\{t=0\}$, we derive the
the classical energy conservation
\begin{equation}
\label{eq:E0}
 \int_{\R}\mathbb{T}_{00}(t,x)dx=\frac{1}{2}\int_{\mathbb{R}}|\pa_t\phi(t, x)|^2+|\pa_x\phi(t, x)|^2+\frac{2}{p+1}|\phi(t, x)|^{p+1}dx=\f12 \mathcal{E}_{0}[\phi],\quad \forall t\in \mathbb{R}.
\end{equation}
We will also use the energy conservation adapted to the region bounded by $\{t=0\}$, $\{t=T\}$ and the out going null line $\{x=t+R\}$
\begin{align}
\label{eq:EC:out}
  &\int_{0}^{T}\left(\frac{|\L\phi|^2}{2}+\frac{|\phi|^{p+1}}{p+1}\right)\Big|_{x=t+R} dt=\int_{t+R}^{+\infty} \mathbb{T}_{00}(t,x)dx\Big|_{t=T}^{t=0}
  \leq \frac{1}{2}\mathcal{E}_0[\phi]
\end{align}
as well as the associated energy conservation applied to the incoming null line
\begin{align}
\label{eq:EC:in}
  &\int_{0}^{T}\left(\frac{|\Lb\phi|^2}{2}+\frac{|\phi|^{p+1}}{p+1}\right)\Big|_{-x=t+R} dt= \int^{-t-R}_{-\infty} \mathbb{T}_{00}(t,x)dx\Big|_{t=T}^{t=0}
  \leq \frac{1}{2}\mathcal{E}_0[\phi].
\end{align}
For convenience, we may also carry out the computations under the null frame. First of all, we can rewrite the equation as follows
$$ \Box=-\partial_t^2+\partial_x^2=-\L\Lu,\quad \Box\phi=-\L\Lu \phi= |\phi|^{p-1}\phi. $$
 We therefore can obtain the identities
 \begin{align}
 \label{eq:L}
  &\L(\Lu\phi)^2=2(\L\Lu \phi)(\Lu \phi)=-2|\phi|^{p-1}\phi\cdot\Lu \phi=-\frac{2}{p+1} \Lu (|\phi|^{p+1}) ,\\
  \label{eq:Lu}
  &\Lu(\L\phi)^2=2(\L\Lu \phi)(\L \phi)=-2|\phi|^{p-1}\phi\cdot\L \phi=-\frac{2}{p+1} \L(|\phi|^{p+1} ).
\end{align}
The first identity can be viewed as using the vector field $\Lu$ as multiplier while the second identity is equivalent to take the vector field $L$ as multiplier. We finally define a quantity
\begin{align}
 \label{defQ}
  &Q=(-\partial_t^2+\partial_x^2)\phi^2=-2|\partial_t\phi|^2+2|\partial_x\phi|^2+2|\phi|^{p+1}=-2(\L\phi)(\Lu\phi)+2|\phi|^{p+1},
\end{align}
which will be frequently used in the sequel.

The proof for Theorem \ref{thm:polynomial:decay} relies on weighted energy estimates through null lines obtained by using weighted vector fields as
multipliers.
For the pointwise decay estimates, we will rely on the following type of  
  Gagliardo-Nirenberg  inequality.
\begin{Lem}
\label{lem:GN1}
Let $a_1, a_2, \mu_1, \mu_2$ be constants such that
\[
a_1\geq 0, \quad a_2\geq 1,\quad 0\leq \mu_1\leq 1,\quad \mu_2\geq -\mu_1.
\]
Then for a function $g(t)\in H_{loc}^1([a_1,+\infty))$, 
it holds that
\begin{align*}
(t+a_2)^{\mu_1+\mu_2}|g(t)|^{p+3}
&\leq C   \int_{a_1}^{\infty} (s+a_2)^{\mu_2}|g(s)|^{p+1}ds  \cdot \int_{a_1}^{\infty}(s+a_2)^{\mu_1}|g'(s)|^{2}ds
\end{align*}
for all $t\geq a_1$ with some constant $C$ independent of $t$, $a_1$ and $ a_2$. Here we assume that each integral in the right hand side of the above inequality is finite.
\end{Lem}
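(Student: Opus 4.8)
The plan is to reduce this weighted inequality to a pointwise bound coming from the one--dimensional fundamental theorem of calculus, followed by a weighted Cauchy--Schwarz inequality; the entire difficulty is to route the weight $(t+a_2)^{\mu_1+\mu_2}$ correctly onto the two integrals on the right. Throughout write $w(s)=s+a_2$, and note that $w(s)\geq a_1+a_2\geq 1$ for all $s\geq a_1$, and that the hypotheses give $\mu_1+\mu_2\geq 0$.

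First I would establish that $g(s)\to 0$ as $s\to\infty$, which is needed to run the integration to infinity. Since $w\geq 1$ and $-\mu_1\leq \mu_2$ we have $w^{-\mu_1}\leq w^{\mu_2}$, so $\int_{a_1}^\infty w^{-\mu_1}|g|^{p+1}\,ds$ and $\int_{a_1}^\infty w^{\mu_1}|g'|^2\,ds$ are both finite; Cauchy--Schwarz then gives $\int_{a_1}^\infty |g|^{(p+1)/2}|g'|\,ds<\infty$. Because $s\mapsto |g(s)|^{(p+3)/2}$ is a $C^1$ function of the absolutely continuous $g$ (here $(p+3)/2>1$), this forces $|g(s)|^{(p+3)/2}$ to converge as $s\to\infty$, and the limit must be $0$: otherwise $|g|^{p+1}$ would be bounded below while $\int_{a_1}^\infty w^{\mu_2}\,ds=\infty$ (as $\mu_2\geq -\mu_1\geq -1$), contradicting the finiteness of $\int_{a_1}^\infty w^{\mu_2}|g|^{p+1}\,ds$.

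With decay in hand, the core estimate is
$$|g(t)|^{(p+3)/2}=-\int_t^\infty \frac{d}{ds}|g(s)|^{(p+3)/2}\,ds\leq \frac{p+3}{2}\int_t^\infty |g|^{(p+1)/2}|g'|\,ds,$$
combined with the weighted Cauchy--Schwarz inequality using the specific split $w^{-\mu_1/2}\cdot w^{\mu_1/2}$,
$$\int_t^\infty |g|^{(p+1)/2}|g'|\,ds\leq \Big(\int_t^\infty w^{-\mu_1}|g|^{p+1}\,ds\Big)^{1/2}\Big(\int_t^\infty w^{\mu_1}|g'|^2\,ds\Big)^{1/2}.$$
Squaring and multiplying by $w(t)^{\mu_1+\mu_2}$ yields
$$w(t)^{\mu_1+\mu_2}|g(t)|^{p+3}\leq \Big(\frac{p+3}{2}\Big)^2\Big(w(t)^{\mu_1+\mu_2}\int_t^\infty w^{-\mu_1}|g|^{p+1}\,ds\Big)\Big(\int_t^\infty w^{\mu_1}|g'|^2\,ds\Big).$$
The second factor is at most $\int_{a_1}^\infty w^{\mu_1}|g'|^2\,ds$ since $t\geq a_1$. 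For the first factor I would transport the weight by monotonicity: for $s\geq t$ one has $w(t)^{\mu_1+\mu_2}w(s)^{-\mu_1}\leq w(s)^{\mu_1+\mu_2}w(s)^{-\mu_1}=w(s)^{\mu_2}$, using $w(s)\geq w(t)\geq 1$ together with $\mu_1+\mu_2\geq 0$; hence this factor is bounded by $\int_{a_1}^\infty w^{\mu_2}|g|^{p+1}\,ds$. Combining gives the claim with $C=((p+3)/2)^2$, independent of $t$, $a_1$, $a_2$.

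The main obstacle, and the reason a naive splitting fails, is that $\mu_2$ may be negative, so one cannot bound $w(t)^{\mu_2}\leq w(s)^{\mu_2}$ on $[t,\infty)$. The resolution is to choose the interior Cauchy--Schwarz weight on the $|g|^{p+1}$ factor to be exactly $w^{-\mu_1}$ rather than $w^{\mu_2}$, so that the leftover weight to be transported across $[t,\infty)$ is $w(t)^{\mu_1+\mu_2}$ with the favorable sign $\mu_1+\mu_2\geq 0$. In this way the constraints $0\leq \mu_1\leq 1$ and $\mu_2\geq -\mu_1$ feed into exactly the two places where they are needed: the decay argument and the weight transport.
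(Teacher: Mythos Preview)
Your proof is correct. The paper's argument is a close variant: instead of first proving $g\to 0$ and then transporting the external weight $w(t)^{\mu_1+\mu_2}$ at the end, the paper defines $h(t)=w(t)^{(\mu_1+\mu_2)/2}|g(t)|^{(p+3)/2}$ with the weight already built in, observes that the contribution of the weight to $h'$ is nonnegative (this is exactly where $\mu_1+\mu_2\ge 0$ enters), and bounds $\max(-h',0)$ by Cauchy--Schwarz with the direct split $w^{\mu_2/2}\cdot w^{\mu_1/2}$; it then shows $\inf_{s\ge t}h(s)=0$ by the same divergence-of-$\int w^{\mu_2}$ argument you use to force $g\to 0$. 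Your version instead keeps the weight outside, uses the Cauchy--Schwarz split $w^{-\mu_1/2}\cdot w^{\mu_1/2}$, and then transports $w(t)^{\mu_1+\mu_2}$ onto the $|g|^{p+1}$ integral via the explicit monotonicity $w(t)^{\mu_1+\mu_2}\le w(s)^{\mu_1+\mu_2}$ for $s\ge t$. The two routings are equivalent in effect and yield the same constant $C=((p+3)/2)^2$; yours makes a bit more visible where each constraint ($\mu_1\le 1$ for the decay step, $\mu_1+\mu_2\ge 0$ for the weight transport) is actually consumed.
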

\begin{proof}
Define the non-negative function
$$h(t)=(t+a_2)^{\frac{\mu_1+\mu_2}{2}}|g(t)|^{\frac{p+3}{2}}.$$
 Then we have
\begin{align*}
h'(t)&=\frac{\mu_1+\mu_2}{2}(t+a_2)^{\frac{\mu_1+\mu_2}{2}-1}|g(t)|^{\frac{p+3}{2}}+\frac{p+3}{2}(t+a_2)^{\frac{\mu_1+\mu_2}{2}}|g(t)|^{\frac{p-1}{2}}
g(t)g'(t)
\\ &\geq-\frac{p+3}{2}(t+a_2)^{\frac{\mu_1+\mu_2}{2}}|g(t)|^{\frac{p+1}{2}}|g'(t)|.
\end{align*}
Here we note that $ \mu_1+\mu_2\geq 0.$ Now using H\"older inequality we have
\begin{equation}
\label{eq:h'L1}
\begin{split}
\|\max(-h'(t),0)\|_{L^1([a_1,+\infty))} &\leq C\|(t+a_2)^{\frac{\mu_1+\mu_2}{2}}|g(t)|^{\frac{p+1}{2}}|g'(t)|\|_{L^1([a_1,+\infty))}\\
&\leq C\|(t+a_2)^{\mu_2}|g|^{p+1}\|_{L^1([a_1,+\infty))}^{1/2}\|(t+a_2)^{\mu_1}|g'|^{2}\|_{L^1([a_1,+\infty))}^{1/2}.
\end{split}
\end{equation}
For $t_2\geq t_1\geq a_1$ we have
\begin{align*}
&h(t_1)-h(t_2)=\int_{t_1}^{t_2}-h'(t)dt\leq \int_{t_1}^{t_2}\max(-h'(t),0)dt\leq\|\max(-h'(t),0)\|_{L^1([a_1,+\infty))}.
\end{align*}
Therefore
\begin{align}
\label{eq:ht1}
&h(t_1)\leq\inf_{t\geq t_1}h(t)+\|\max(-h'(t),0)\|_{L^1([a_1,+\infty))},\quad \forall\ t_1\geq a_1.
\end{align}
Now for fixed $t_1\geq a_1,$ we claim that $ \inf_{t\geq t_1}h(t)=0$. Otherwise if $\inf_{t\geq t_1}h(t)=c_0 $ for some positive constant $c_0$, then we derive that
\begin{align*}
&h(t)=(t+a_2)^{\frac{\mu_1+\mu_2}{2}}|g(t)|^{\frac{p+3}{2}}\geq c_0,\quad |g(t)|\geq (t+a_2)^{-\frac{\mu_1+\mu_2}{p+3}}c_0^{\frac{2}{p+3}},\\& (t+a_2)^{\mu_2}|g|^{p+1}\geq (t+a_2)^{\mu_2-\frac{p+1}{p+3}(\mu_1+\mu_2)}c_0^{\frac{2(p+1)}{p+3}},\quad \forall\ t\geq t_1.
\end{align*}
As $\mu_1+\mu_2\geq 0$ and $ \mu_1\leq 1, $ we then conclude that
\begin{align*}
&{\mu_2-\frac{p+1}{p+3}(\mu_1+\mu_2)}\geq \mu_2-(\mu_1+\mu_2)\geq -\mu_1\geq -1.
\end{align*}
Now from the assumption $a_2\geq 1,\ t_1\geq a_1\geq 0$, we show that
\begin{align*}
\|(t+a_2)^{\mu_2}|g|^{p+1}\|_{L^1([a_1,+\infty))} & \geq\|(t+a_2)^{\mu_2}|g|^{p+1}\|_{L^1([t_1,+\infty))}
\\
& \geq \|(t+a_2)^{\mu_2-\frac{p+1}{p+3}(\mu_1+\mu_2)}\|_{L^1([t_1,+\infty))}c_0^{\frac{2(p+1)}{p+3}}\\
& \geq\|(t+a_2)^{-1}\|_{L^1([t_1,+\infty))}c_0^{\frac{2(p+1)}{p+3}}=+\infty,
\end{align*}
which contradicts the assumption that the first integral in the right hand side of the inequality of the Lemma is finite.
 Therefore we must have $ \inf_{t\geq t_1}h(t)=0$, which together with estimates \eqref{eq:ht1} and \eqref{eq:h'L1} leads to
\begin{align*}
h(t_1) &\leq\|\max(-h'(t),0)\|_{L^1([a_1,+\infty))}\\
&\leq C\|(t+a_2)^{\mu_2}|g|^{p+1}\|_{L^1([a_1,+\infty))}^{1/2}\|(t+a_2)^{\mu_1}|g'|^{2}\|_{L^1([a_1,+\infty))}^{1/2}.
\end{align*}
The Lemma then holds by definition of $h$ and squaring the above inequality.
\end{proof}
\begin{Remark}
\label{rem1}
For the special case when $a_1=\mu_1=\mu_2=0$, we derive from Lemma \ref{lem:GN1} that
\begin{align*}
|g(t)|^{\frac{p+3}{2}}
&\leq C\||g|^{p+1}\|_{L^1([0,+\infty))}^{1/2}\||g'|^{2}\|_{L^1([0,+\infty))}^{1/2},\ \forall\ t\geq 0.
\end{align*}
By symmetry we also have
\begin{align*}
|g(t)|^{\frac{p+3}{2}}
&\leq C\||g|^{p+1}\|_{L^1((-\infty,0])}^{1/2}\||g'|^{2}\|_{L^1((-\infty,0])}^{1/2},\ \forall\ t\leq 0.
\end{align*}
Now we obtain the classical Gagliardo-Nirenberg inequality
\begin{align*}
\|g\|_{L^{\infty}(\R)}^{({p+3})/{2}}
&\leq C\|g\|_{L^{p+1}(\R)}^{({p+1})/{2}}\|g'\|_{L^2(\R)}
\end{align*}
with constant $C$ depending only on $p$.
\end{Remark}
For finite energy solution of \eqref{eq:NLW:semi:1d}, the above classical Gagliardo-Nirenberg inequality together with the energy conservation \eqref{eq:E0} implies that
\begin{align*}
\|\phi(t, x)\|_{L_x^{\infty}(\R)}^{ p+3}
&\leq C \int_{\R} |\phi(t, x)|^{p+1} dx\cdot \int_{\R} |\pa_x\phi(t, x)|^2dx \leq\frac{p+1}{2}C \mathcal{E}_{0}[\phi]^2.
\end{align*}
In particular the finite energy solution verifies the following uniform bound
\begin{align}
\label{eq:unform:bd:phi}
|\phi|\leq C_p \mathcal{E}_0[\phi]^{\frac{2}{p+3}}.
\end{align}
This uniform bound is crucial during the proof. Moreover we see that the solution decays to $0$ once we have the potential energy decay for finite energy solutions.

Since the wave equation is time reversible, without loss of generality we only prove estimates in the future $t\geq 0$. Hence in the sequel we always assume that $t\geq 0$ unless it is specified.

\section{Proof of Theorem \ref{thm:potential:decay}}
In this section, the implicit constant in $A\les B$ depends only on the power $p$ and the standard energy $\mathcal{E}_{0}[\phi]$.

The key new observation to prove Theorem \ref{thm:potential:decay} is the time decay of the potential energy. By using the scaling vector field as multiplier applied to a truncated forward light cone, it follows from a type of weaker averaged decay estimate of the potential energy, which indeed is a consequence of the following the following integrated local potential energy decay estimate.
\begin{Prop}
\label{prop:Mora}
Let $\phi$ be a finite energy solution to \eqref{eq:NLW:semi:1d}. Then the potential energy verifies the following decay estimate
\begin{align*}
  &\int_0^{+\infty}\int_{-t-1}^{t+1}\frac{((t+1)^2-x^2)|\phi(t,x)|^{p+1}}{(t+1)^3}dxdt\leq C < \infty
\end{align*}
for some constant $C$ depending only on $p$ and the energy $\mathcal{E}_0[\phi]$.
\end{Prop}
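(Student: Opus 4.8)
The plan is to use the vector field multiplier method with the specific vector field indicated in the introduction, namely
\[
X = \left(1+(1+t)^{-2}x^2\right)\pa_t + 2(1+t)^{-1}x\,\pa_x,
\]
applied to the solution $\phi$ over the truncated forward light cone (or really the full region $\{t\geq 0,\ |x|\leq t+1\}$). The general scheme is that for any vector field $X = f\pa_t + g\pa_x$ one forms the current $P_\mu = \mathbb{T}_{\mu\nu}X^\nu$ and computes $\pa^\mu P_\mu = \tfrac12 \mathbb{T}^{\mu\nu}\pi_{\mu\nu}$, where $\pi$ is the deformation tensor of $X$. Integrating the divergence identity $\pa_t P_0 - \pa_x P_1 = \pa^\mu P_\mu$ over the region and using the conservation laws \eqref{eq:T00} converts the spacetime integral of $\pa^\mu P_\mu$ into boundary flux terms. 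The weight $(1+t)^{-2}$ attached to the $\pa_t$-component is chosen precisely so that the bulk term $\pa^\mu P_\mu$ produces the integrand $\frac{((t+1)^2-x^2)|\phi|^{p+1}}{(t+1)^3}$ with a favorable (nonnegative) sign, after using the algebraic identities \eqref{eq:T11} relating $\mathbb{T}_{00}-\mathbb{T}_{11}$ to $\frac{2|\phi|^{p+1}}{p+1}$.

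First I would compute the components $P_0 = \mathbb{T}_{00}f + \mathbb{T}_{01}g$ and $P_1 = \mathbb{T}_{01}f + \mathbb{T}_{11}g$ with $f = 1+(1+t)^{-2}x^2$ and $g = 2(1+t)^{-1}x$, then compute the bulk divergence $\pa_t P_0 - \pa_x P_1$. Carrying out the derivatives, the terms involving $\pa_t f$, $\pa_x f$, $\pa_t g$, $\pa_x g$ multiply the various $\mathbb{T}_{\mu\nu}$ components; the key is that the cross-derivative structure (note $\pa_x f = 2(1+t)^{-2}x = \pa_t g$) makes the $\mathbb{T}_{01}$ contributions combine cleanly, while the diagonal contributions involve $\mathbb{T}_{00}$ and $\mathbb{T}_{11}$ multiplied by the remaining derivatives. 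Using $\mathbb{T}_{00}-\mathbb{T}_{11} = \frac{2|\phi|^{p+1}}{p+1}$ and tracking the coefficient, I expect the bulk integrand to reduce (up to positive constants) exactly to $\frac{((t+1)^2-x^2)|\phi|^{p+1}}{(t+1)^3}$ plus possibly additional terms of a definite sign, which is why the stated quantity is controlled.

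Next I would handle the boundary terms. On the initial slice $\{t=0\}$ the flux is $\int P_0\,dx$, which is bounded by a multiple of $\mathcal{E}_0[\phi]$ since the weights $f$ and $g$ are bounded on that slice (indeed $f = 1+x^2$, $g=0$, but integrated only over $|x|\leq 1$, so the weight is $O(1)$) — here the restriction to the light cone $|x|\leq t+1$, equivalently $|x|\leq 1$ on the initial slice, keeps the weight bounded. On the lateral null boundaries $\{x = \pm(t+1)\}$ one checks that the outgoing/incoming flux has a favorable sign or is controlled by the energy fluxes \eqref{eq:EC:out} and \eqref{eq:EC:in}; the coefficient structure of $X$ restricted to the null directions needs to be shown nonnegative (or bounded by $\tfrac12\mathcal{E}_0$). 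The final spacetime integral is obtained by rearranging the divergence identity as (bulk) $=$ (initial flux) $-$ (lateral flux) $-\lim_{t\to\infty}$(future flux), and discarding the nonnegative future flux term.

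\textbf{The main obstacle} I anticipate is verifying the sign and boundedness of the boundary contributions — specifically that the lateral null-boundary fluxes do not carry the wrong sign and can genuinely be absorbed into $\mathcal{E}_0[\phi]$ via \eqref{eq:EC:out}–\eqref{eq:EC:in}, and that the future flux $\lim_{t\to\infty}\int_{|x|\leq t+1} P_0\,dx$ is nonnegative so it can be dropped. Establishing nonnegativity of $P_0$ (so that the future flux can be discarded) amounts to checking that the multiplier $X$ is future-timelike or null, i.e. $f \geq |g|$, which here reads $1+(1+t)^{-2}x^2 \geq 2(1+t)^{-1}|x|$, an AM-GM inequality that holds with equality exactly on the light ray $|x|=t+1$ — this is the conceptual reason the estimate degenerates at the light cone and the weight $((t+1)^2-x^2)$ vanishes there. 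The routine but careful part is the bulk computation of the deformation tensor and confirming that every error term produced carries a controllable sign; I would organize that calculation around the null-frame identities \eqref{eq:L}–\eqref{eq:Lu} to keep the bookkeeping manageable.
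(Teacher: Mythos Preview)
Your choice of multiplier $X=(1+t_*^{-2}x^2)\pa_t+2t_*^{-1}x\pa_x$ (here $t_*=t+1$) is exactly the one the paper uses, and your analysis of the boundary fluxes is essentially right: $X$ is future causal on $\{|x|\leq t_*\}$, so the future flux is nonnegative, and the initial and lateral fluxes are controlled by $\mathcal{E}_0[\phi]$ via \eqref{eq:E0}, \eqref{eq:EC:out}, \eqref{eq:EC:in}. The gap is in the bulk. Carrying out the deformation-tensor computation gives
\[
\pa_t P_0-\pa_x P_1=\frac{2(t_*^2-x^2)}{(p+1)t_*^3}|\phi|^{p+1}
-\frac{1}{t_*^3}\big[(x\pa_t\phi+t_*\pa_x\phi)^2+(t_*\pa_t\phi+x\pa_x\phi)^2\big].
\]
The two derivative squares do have ``a definite sign'', but it is the \emph{wrong} sign for your purpose: to bound $\int A$ (the potential integral) you would need $\int B$ (the derivative integral) bounded, and from energy conservation alone one only gets $\int B\lesssim \mathcal{E}_0[\phi]\log T$, which diverges. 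So the pure multiplier current does not close.

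The paper's remedy is to add a zeroth-order correction to the current: one subtracts $2uvt_*^{-3}Q$ with $Q=\Box\phi^2$ from the divergence identity (equivalently, modifies $P_\mu$ by terms of the form $\chi\,\pa_\mu\phi^2-(\pa_\mu\chi)\phi^2$ with $\chi=uvt_*^{-3}$). This converts the two squares into the single square $-2t_*^{-3}(x\pa_t\phi+t_*\pa_x\phi)^2$, which can now be discarded, at the price of changing the potential coefficient to $-\frac{(p-1)}{p+1}\cdot\frac{t_*^2-x^2}{t_*^3}$ (still of favorable sign since $p>1$) and introducing the new error $\int_{\Sigma_1^T}\phi^2\,\Box(uvt_*^{-3})\,dxdt$ plus extra boundary terms. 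These are bounded not by energy alone but by invoking the uniform pointwise bound \eqref{eq:unform:bd:phi} on $\phi$ together with $|\Box(uvt_*^{-3})|\lesssim t_*^{-3}$. This modified-current step (and the accompanying use of $\|\phi\|_{L^\infty}\lesssim\mathcal{E}_0^{2/(p+3)}$) is the missing idea in your outline.
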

\begin{Remark}
Such integrated local energy estimate dates back to Morawetz \cite{mora1}, \cite{mora2}. The original method there can easily lead to a stronger estimate than that of the above Proposition in higher dimenions ($d\geq 3$).
Similar estimates are available for solutions of defocusing nonlinear Klein-Gordon equations in lower dimensions, see \cite{Nakanishi99:scattering:NKG:12D}.
\end{Remark}
\begin{proof}
Let $t_*=t+1$. Recall the null frame  $ \L=\partial_x+\partial_t$, $\Lu=\partial_t-\partial_x$ and the associated null coordinates $u=t_*-x$, $v=t_*+x$. In particular we have
$$\L (u/t_*)=-\L (v/t_*)=-u/t_*^2,\quad \Lu (v/t_*)=-\Lu(u/t_*)=-v/t_*^2 .$$
Then in view of the equation \eqref{eq:L}, we have
\begin{align*}
  \L(u^{2}t_*^{-2}(\Lu\phi)^2)+\frac{2}{p+1} \Lu(u^{2} |\phi|^{p+1} t_*^{-2})&=\L(u^{2}t_*^{-2})(\Lu\phi)^2+\frac{2}{p+1}\Lu(u^{2}t_*^{-2}) |\phi|^{p+1}\\
  &=-2u^{2}t_*^{-3}(\Lu\phi)^2+\frac{4}{p+1} uv t_*^{-3} |\phi|^{p+1}.
\end{align*}
Similarly by \eqref{eq:Lu}, we also have
\begin{align*}
&\Lu(v^{2}t_*^{-2}(\L\phi)^2)+\frac{2}{p+1}\L (t_*^{-2}v^{2}|\phi|^{p+1})
=-2v^{2}t_*^{-3}(\L\phi)^2+\frac{4}{p+1} uvt_*^{-3} |\phi|^{p+1}.
\end{align*}
Combining these two identities, we end up with
\begin{align*}
&\L\left(\frac{u^{2}(\Lu\phi)^2}{t_*^2}+\frac{2v^{2}|\phi|^{p+1}}{t_*^2(p+1)}\right)
+\Lu\left(\frac{v^{2}(\L\phi)^2}{t_*^2}+\frac{2u^{2}|\phi|^{p+1}}{t_*^2(p+1)}\right)\\
&=-2u^{2}t_*^{-3}(\Lu\phi)^2-2v^{2}t_*^{-3}(\L\phi)^2+\frac{8}{p+1} uvt_*^{-3} |\phi|^{p+1}.
\end{align*}
Now we introduce
\begin{align*}
&P_3:=\frac{u^{2}(\Lu\phi)^2}{t_*^2}+\frac{2v^{2}|\phi|^{p+1}}{t_*^2(p+1)},\quad  P_4:=\frac{v^{2}(\L\phi)^2}{t_*^2}+\frac{2u^{2}|\phi|^{p+1}}{t_*^2(p+1)}.
\end{align*}
And recall $Q=\Box \phi^2$ as in \eqref{defQ}. Then the previous inequality leads to
\begin{align*}
&\L P_3
+\Lu P_4-2uvt_*^{-3}Q\\
&=-2u^{2}t_*^{-3}(\Lu\phi)^2-2v^{2}t_*^{-3}(\L\phi)^2+4uvt_*^{-3}(\L\phi)(\Lu\phi)-\frac{4(p-1)}{p+1} uvt_*^{-3} |\phi|^{p+1}
\\
&=-2t_*^{-3}|u\Lu\phi-v\L\phi|^2-\frac{4(p-1)}{p+1} uvt_*^{-3}  |\phi|^{p+1} \\
&\leq -\frac{4(p-1)}{p+1} uvt_*^{-3}  |\phi|^{p+1}.
\end{align*}
Integrating this inequality on the domain
\[
\Sigma_{1}^{T}:=\{(t, x):\,0\leq t\leq T,\quad |x|\leq t+1\},\quad T>0
\]
and using Stokes formula, we obtain that
\begin{align}
\nonumber
  &\frac{p-1}{p+1}\int_{\Sigma_{1}^{T}}4uvt_*^{-3}|\phi|^{p+1}dxdt-\int_{\Sigma_{1}^{T}}2uvt_*^{-3}Qdxdt\\
  \notag
  &\leq-\int_{\Sigma_1^T}(\L P_3
+\Lu P_4)dxdt\\
\label{eq:QP3P4}
&=-\int_{\Sigma_{1}^{T}}(\partial_t(P_3+P_4)+\partial_x(P_3-P_4)
)dxdt\\
\notag
&=\int_{\partial\Sigma_{1}^{T}}((P_3+P_4)dx-(P_3-P_4)dt).
\end{align}
Since $Q=\Box \phi^2$, by Stokes formula again, we have
\begin{align}
\label{eq:Suv}
  &\int_{\Sigma_{1}^{T}}uvt_*^{-3}Qdxdt=\int_{\Sigma_{1}^{T}}uvt_*^{-3}\Box \phi^2dxdt=\int_{\Sigma_{1}^{T}}\phi^2\Box(uvt_*^{-3}) dxdt+\int_{\partial\Sigma_{1}^{T}}\omega_1,
  \end{align}
  where
  \begin{align*}
&\omega_1:=(uvt_*^{-3}\partial_x (\phi^2)-\phi^2\partial_x(uvt_*^{-3}))dt+(uvt_*^{-3}\partial_t (\phi^2)-\phi^2\partial_t(uvt_*^{-3}))dx.
\end{align*}
Now for $|x|\leq t+1=t_*$, we compute that
\begin{align*}
&\Box(uvt_*^{-3})=\Box(t_*^{-1}-x^2t_*^{-3})=-4t_*^{-3}+12x^2t_*^{-5}\leq 8t_*^{-3}.
\end{align*}
Using the uniform bound \eqref{eq:unform:bd:phi} of the solution, we conclude that
\begin{align}
\label{eq:Sab}
  &\int_{\Sigma_{1}^{T}}\phi^2\Box(uvt_*^{-3}) dxdt \les  \int_{1}^{T}\int_{-T-1}^{T+1}(t+1)^{-3}dxdt \les 1.
\end{align}
Now we need to compute the boundary integrals. The boundary $\pa\Sigma_{1}^{T}$ consists of the null segments
$$\Gamma_1''=\{x=t+1,\ 0\leq t\leq T\},\quad \Gamma_2''=\{-x=t+1,\ 0\leq t\leq T\}$$
 and the constant $t$-slice
  $$\Gamma_3''=\{t=0,\ |x|\leq 1\},\quad \Gamma_4''=\{t=T,\ |x|\leq T+1\}.$$
   On the null segment $\Gamma_1''\subset\{x=t+1\}$, we have $u=(\partial_x+\partial_t)u=0$  and
\begin{align*}
&\omega_1=-\phi^2(\partial_x+\partial_t)(uvt_*^{-3})dt=0.
\end{align*}
Similarly on  the null segment $\Gamma_2''$, we have $ \omega_1=0.$
On the constant $t$-slice $\Gamma_3''$, $\Gamma_4''$, we have
\begin{align*}
&\omega_1=(uvt_*^{-3}\partial_t (\phi^2)-\phi^2\partial_t(uvt_*^{-3}))dx.
\end{align*}
Now for $|x|\leq t_*=t+1$, we in particular have that
\begin{align*}
|uvt_*^{-3}|=|(t_*^2-x^2)t_*^{-3}|\leq t_*^{-1},\quad |\partial_t(uvt_*^{-3})|=|t_*^{-2}-3x^2t_*^{-4}|\leq 2t_*^{-2}.
\end{align*}
Then in view of the energy conservation \eqref{eq:E0} and the uniform bound of the solution \eqref{eq:unform:bd:phi},
we then can bound that
\begin{align*}
&\int_{-t-1}^{t+1}|uvt_*^{-3}\partial_t (\phi^2)-\phi^2\partial_t(uvt_*^{-3})|dx \\
&\les \int_{-t-1}^{t+1}t_*^{-1}|\partial_t \phi||\phi|+t_*^{-2}|\phi|^2 dx\\
& \les  1+t_*^{-1}  \int_{-t-1}^{t+1} |\pa_t\phi| dx\\
&\les 1+ t_*^{-\frac{1}{2}}  \left(\int_{-t-1}^{t+1}|\pa_t\phi|^2 dx \right)^{\frac{1}{2}}\\
&\les 1,\quad \forall t\geq 0.
\end{align*}
In particular the integration of $ \omega_1$ on $\Gamma_3''$, $\Gamma_4'' $ is uniformly bounded by a constant independent of $T$. Combining this with estimates \eqref{eq:Suv}, \eqref{eq:Sab} and \eqref{eq:QP3P4}, we have shown that
\begin{align}
  \label{eq:ILE:000}
  \frac{p-1}{p+1}\int_{\Sigma_{1}^{T}}4uvt_*^{-3}|\phi|^{p+1}dxdt\les 1+\int_{\partial\Sigma_{1}^{T}}(P_3+P_4)dx-(P_3-P_4)dt.
\end{align}
Now for the boundary integral on the right hand side, on the null segment $\Gamma_1''\subset\{x=t+1\}$, we have
\begin{align*}
&(P_3+P_4)dx-(P_3-P_4)dt=2P_4dt,
\end{align*}
while on the null segment $\Gamma_2''\subset\{-x=t+1\}$, the integrand becomes
\begin{align*}
&(P_3+P_4)dx-(P_3-P_4)dt=-2P_3dt.
\end{align*}
On the constant $t$-slice $\Gamma_3''$, $\Gamma_4''$ on which $dt=0$,
it is clear that
\begin{align*}
&(P_3+P_4)dx-(P_3-P_4)dt=(P_3+P_4)dx.
\end{align*}
Summing up and considering the orientation we can show that\begin{align*}
  &\int_{\partial\Sigma_{1}^{T}}((P_3+P_4)dx-(P_3-P_4)dt)\\
  &=\int_{-t-1}^{t+1}(P_3+P_4)dx\Big|_{t=T}^{t=0}
  +\int_{0}^{T}2P_4(t,t+1)dt+\int_{0}^{T}2P_3(t,-t-1)dt.
\end{align*}
On the outgoing null segment $\{x=t+1\}$, by definition, we have
\begin{align*}
P_4(t, t+1)= 4|L(\phi)|^2.
\end{align*}
By using the standard energy estimate \eqref{eq:EC:out} adapted to the outgoing null line $\{x=t+1\}$, we conclude that
\begin{align*}
\int_0^T P_4(t, t+1)dt \leq 4 \int_{\{x=t+1\}} |L(\phi)|^2+\frac{2|\phi|^{p+1}}{p+1}dt \leq 8 \mathcal{E}_0[\phi].
\end{align*}
Similarly on the incoming null lines $\{x=-t-1\}$, we also have that
\begin{align*}
&\int_{0}^{T} P_3(t,-t-1)dt\leq 8\mathcal{E}_{0}[\phi].
\end{align*}
Next on the constant t-slice such that $|x|\leq t+1$, we show that
\begin{align*}
P_3+P_4&=t_*^{-2} u^2 |\Lb(\phi)|^2+t_*^{-2} v^2|L(\phi)|^2+\frac{2(u^2+v^2)|\phi|^{p+1}}{t_*^2(p+1)}\\
&\leq 4(|L(\phi)|^2+|\Lb(\phi)|^2)+\frac{8|\phi|^{p+1}}{p+1}.
\end{align*}
Hence by energy conservation \eqref{eq:E0}, we can bound that
\begin{align*}
\left|\int_{-t-1}^{t+1}(P_3+P_4)dx\Big|_{t=T}^{t=0}\right|\leq 16\mathcal{E}_0[\phi].
\end{align*}
Then in view of inequality \eqref{eq:ILE:000}, we conclude that
\begin{align*}
  \frac{p-1}{p+1}\int_{\Sigma_{1}^{T}}4uvt_*^{-3}|\phi|^{p+1}dxdt\les 1.
\end{align*}
Recall that $t_*=t+1$, $uv=t_*^2-x^2$. By the definition of $\Sigma_{1}^{T} $, the above estimate implies that
\begin{align*}
  &\int_0^{T}\int_{-t-1}^{t+1}\frac{((t+1)^2-x^2)|\phi(t,x)|^{p+1}}{(t+1)^3}dxdt=\int_{\Sigma_{1}^{T}}uvt_*^{-3}|\phi|^{p+1}dxdt\les 1.
\end{align*}
The proposition then follows by letting $ T\to+\infty$ and the convention that the implicit constant relies only on the power $p$ and the energy $\mathcal{E}_0[\phi]$.
\end{proof}

We now use the above integrated local potential energy decay to show the averaged decay for the potential energy.
\begin{Prop}
\label{prop:phi}
Let $\phi$ be a finite energy solution to the defocusing semilinear wave equation \eqref{eq:NLW:semi:1d}. Then for fixed $R>0,$ the potential energy verifies the averaged decay estimate
\begin{align*}
  &\lim_{T\to+\infty}\frac{1}{T}\int_0^{T}\int_{|x|\leq t+R}|\phi(t,x)|^{p+1}dxdt=0.
\end{align*}
\end{Prop}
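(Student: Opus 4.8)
The plan is to decompose the region $\{|x|\le t+R\}$ into a piece away from the two light rays and a piece near them, using a parameter $\lambda\in(0,1)$, and to treat the two pieces by the two estimates at hand: Proposition~\ref{prop:Mora} controls the interior away from the light rays, while the null-line energy fluxes \eqref{eq:EC:out} and \eqref{eq:EC:in} control a neighborhood of them. Concretely, for each $\lambda\in(0,1)$ I would write
\[
\int_{|x|\le t+R}|\phi|^{p+1}\,dx=g_{\mathrm{in}}(t)+g_{\mathrm{out}}(t),\qquad g_{\mathrm{in}}(t)=\int_{|x|\le\lambda(t+1)}|\phi|^{p+1}\,dx,
\]
with $g_{\mathrm{out}}$ the integral over $\lambda(t+1)<|x|\le t+R$. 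Both are bounded by $\tfrac{p+1}{2}\mathcal{E}_0[\phi]=:M$ by the energy identity \eqref{eq:E0}. The goal then splits into showing $\tfrac1T\int_0^T g_{\mathrm{in}}\to0$ for each fixed $\lambda$, and that $\limsup_{T}\tfrac1T\int_0^T g_{\mathrm{out}}$ is small once $\lambda$ is close to $1$; sending $\lambda\to1$ at the very end kills the average.

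For the interior piece I would use that on $\{|x|\le\lambda(t+1)\}$ the weight of Proposition~\ref{prop:Mora} is bounded below, $\frac{(t+1)^2-x^2}{(t+1)^3}\ge\frac{1-\lambda^2}{t+1}$, so that
\[
\int_0^\infty\frac{g_{\mathrm{in}}(t)}{t+1}\,dt\le\frac{1}{1-\lambda^2}\int_0^\infty\int_{-t-1}^{t+1}\frac{((t+1)^2-x^2)|\phi|^{p+1}}{(t+1)^3}\,dx\,dt\le\frac{C}{1-\lambda^2}<\infty.
\]
Combined with $0\le g_{\mathrm{in}}\le M$, this yields the averaged decay of $g_{\mathrm{in}}$ through the elementary fact that $\int_0^\infty\frac{g}{t+1}\,dt<\infty$ together with $g\le M$ forces $\tfrac1T\int_0^T g\to0$: split the time integral at $\sqrt T$, bounding $\tfrac1T\int_0^{\sqrt T}g\le M/\sqrt T$ and $\tfrac1T\int_{\sqrt T}^T g\le\frac{T+1}{T}\int_{\sqrt T}^\infty\frac{g}{t+1}\,dt$, both of which tend to $0$.

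For the piece near the light rays I would foliate by null segments. On the outgoing side $\lambda(t+1)<x\le t+R$, writing $u=t+1-x$ this is the range $1-R\le u<(1-\lambda)(t+1)$, of width $\sim(1-\lambda)(t+1)$; since $dx=-du$ at fixed $t$, Fubini turns the spacetime integral into an integral in $u$ of the flux $\int_0^T|\phi(t,t+1-u)|^{p+1}\,dt$ along the outgoing line $\{x=t+(1-u)\}$, which is $\le\frac{p+1}{2}\mathcal{E}_0[\phi]$ uniformly in the shift by \eqref{eq:EC:out}. Hence $\int_0^T g_{\mathrm{out}}\,dt\lesssim(1-\lambda)(T+1)$, and after the symmetric estimate on the incoming side via \eqref{eq:EC:in} one gets $\limsup_{T}\tfrac1T\int_0^T g_{\mathrm{out}}\le(p+1)(1-\lambda)\mathcal{E}_0[\phi]$. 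Adding the two contributions and letting $\lambda\uparrow1$ finishes the proof.

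The step I expect to require the most care is the near-light-ray piece: it does not decay in $T$ at all, since the potential energy genuinely concentrates there, so all of its smallness must come from the width factor $(1-\lambda)$. The delicate points are that the flux bound per null segment is uniform in the shift (so that integrating it over the growing $u$-interval is harmless) and that the Fubini rearrangement is legitimate. The interior piece is softer, the only non-routine ingredient being the elementary averaging lemma; note that $\int_0^\infty\frac{g}{t+1}\,dt<\infty$ by itself does not give averaged decay, so the uniform bound $g\le M$ is essential.
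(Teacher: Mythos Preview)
Your proposal is correct and follows essentially the same approach as the paper: both decompose the cone $\{|x|\le t+R\}$ into an interior region $\{|x|\le(1-\epsilon)(t+1)\}$ handled by Proposition~\ref{prop:Mora} and thin strips along the outgoing and incoming null lines handled by the flux bounds \eqref{eq:EC:out}--\eqref{eq:EC:in}, then send the width parameter to zero. The only cosmetic difference is that the paper first chooses a threshold time $T_0$ so that the tail of the Morawetz integral is small, whereas you package the interior contribution via the averaging lemma $\int_0^\infty g/(t+1)\,dt<\infty\Rightarrow \tfrac1T\int_0^T g\to 0$; these are equivalent rearrangements of the same estimate.
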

\begin{proof}
For fixed $ \epsilon\in(0,1),$ by Proposition \ref{prop:Mora} there exists $T_0>0$ such that
\begin{align*}
  &\int_{T_0}^{+\infty}\int_{-t-1}^{t+1}\frac{((t+1)^2-x^2)|\phi(t,x)|^{p+1}}{(t+1)^3}dxdt\leq \epsilon^2.
\end{align*}
Restricting this integral on a smaller region, we conclude that
\begin{align}
\label{eq:T0}
  &\int_{T_0}^{+\infty}\int_{|x|\leq(t+1)(1-\epsilon)}\frac{|\phi(t,x)|^{p+1}}{(t+1)}dxdt\leq \epsilon
\end{align}
by noting that
 \begin{align*}
  &\frac{(t+1)^2-x^2}{(t+1)^3}\geq \frac{(t+1)-|x|}{(t+1)^2}\geq \frac{\epsilon}{(t+1)},\quad\text{for}\quad |x|\leq (t+1)(1-\epsilon).
\end{align*}
Now for $T>T_0$, we control the integrated potential energy by three different parts
\begin{align*}
  &\int_0^{T}\int_{|x|\leq t+R}|\phi(t,x)|^{p+1}dxdt\leq \mathrm{I}+\mathrm{II}+\mathrm{III},
\end{align*}
where
\begin{align*}
 \mathrm{I}=\int_0^{T}\int_{t-T_0-T\epsilon}^{ t+R} & |\phi(t,x)|^{p+1}dxdt,\quad \mathrm{II}=\int_0^{T}\int_{-t-R}^{T_0+T\epsilon-t}|\phi(t,x)|^{p+1}dxdt,
  \\
  \mathrm{III}&=\int_{T_0}^{T}\int_{|x|\leq(t+1)(1-\epsilon)}{|\phi(t,x)|^{p+1}}dxdt.
\end{align*}
To see the above claim, for the case when $t\leq T_0$, we have the inclusion
\begin{align*}
\{|x|\leq t+R\}\subset \{ t-T_0-T\epsilon\leq x\leq t+R\}\cup \{ -t-R\leq x\leq T_0+T\epsilon-t\}
\end{align*}
as $T_0+T\epsilon-t\geq 0$. For the other case when $T_0\leq t\leq T$, by symmetry, it suffices to check the relation
\begin{align*}
(t+1)(1-\epsilon)\geq t-T_0-T\epsilon,
\end{align*}
which holds as $0<\epsilon<1$ and $t\leq T$.

Now by the above estimate \eqref{eq:T0}, we can bound that
\begin{align*}
  &\mathrm{III}\leq(T+1)\int_{T_0}^{T}\int_{|x|\leq(t+1)(1-\epsilon)}\frac{|\phi(t,x)|^{p+1}}{(t+1)}dxdt\leq \epsilon(T+1).
\end{align*}
To control $\mathrm{I}$, we make use of the standard energy estimate \eqref{eq:EC:out} through the out going null lines to deduce that
\begin{align*}
  &\mathrm{I}=\int_{-T_0-T\epsilon}^{ R}\int_0^T|\phi(t,t+a)|^{p+1}dtda\leq\int_{-T_0-T\epsilon}^{ R}(p+1)\mathcal{E}_{0}[\phi]da=(R+T_0+T\epsilon)(p+1)\mathcal{E}_{0}[\phi].
\end{align*}
Similarly or by symmetry we also have that
\begin{align*}
  &\mathrm{II}\leq(R+T_0+T\epsilon)(p+1)\mathcal{E}_{0}[\phi].
\end{align*}
Summing up, we have shown that
\begin{align*}
  &\int_0^{T}\int_{|x|\leq t+R}|\phi(t,x)|^{p+1}dxdt\leq 2(R+T_0+T\epsilon)(p+1)\mathcal{E}_{0}[\phi]+\epsilon(T+1),
\end{align*}
which implies\begin{align*}
  &\limsup_{T\to+\infty}\frac{1}{T}\int_0^{T}\int_{|x|\leq t+R}|\phi(t,x)|^{p+1}dxdt\leq2\epsilon(p+1)\mathcal{E}_{0}[\phi]+\epsilon,\quad \forall\ \epsilon\in(0,1).
\end{align*}
This completes the proof by letting $ \epsilon\to 0.$
\end{proof}

We are now ready to prove the main Theorem \ref{thm:potential:decay}, for which we need to improve the above averaged decay of the potential energy to decay in the pointwise sense.  This is based on weighted energy estimate obtained by using the scaling vector field as multiplier.

 For fixed $R>0,$ in view of the energy identities \eqref{eq:T00}, \eqref{eq:T11}, we have
 \begin{align*}
  &\partial_t((t+R)\mathbb{T}_{00}+x\mathbb{T}_{01})-\partial_x((t+R)\mathbb{T}_{01}+x\mathbb{T}_{11})=\mathbb{T}_{00}-\mathbb{T}_{11}
  =\frac{2|\phi|^{p+1}}{p+1}.
\end{align*}
Integrating this identity on the domain
\[
\Sigma_{R}^{T}:=\{(t, x):\,0\leq t\leq T,\quad |x|\leq t+R\},\quad T>0,\ R>0
\]
 and using Stokes formula, we obtain that
 \begin{align*}
  &\int_{\partial\Sigma_{R}^{T}}[((t+R)\mathbb{T}_{00}+x\mathbb{T}_{01})dx+((t+R)\mathbb{T}_{01}+x\mathbb{T}_{11})dt]
  =-\int_{\Sigma_{R}^{T}}\frac{2|\phi|^{p+1}}{p+1}dxdt.
\end{align*}
The boundary $\pa\Sigma_{R}^{T}$ consists of the null segments
$$\Gamma_1''=\{x=t+R,\ 0\leq t\leq T\},\quad \Gamma_2''=\{-x=t+R,\ 0\leq t\leq T\}$$
and the constant $t$-slice
$$\Gamma_3''=\{t=0,\ |x|\leq R\},\quad \Gamma_4''=\{t=T,\ |x|\leq T+R\}.$$
 For the null segment $\Gamma_1''\subset\{x=t+R\}$, we can compute that (using \eqref{eq:T11})
\begin{align*}
&((t+R)\mathbb{T}_{00}+x\mathbb{T}_{01})dx+((t+R)\mathbb{T}_{01}+x\mathbb{T}_{11})dt=x(\mathbb{T}_{00}+2\mathbb{T}_{01}+\mathbb{T}_{11})dt
=(t+R)|\L\phi|^2dt.
\end{align*}
Similarly for the null segment $\Gamma_2''\subset\{-x=t+R\}$, we instead have
\begin{align*}
&((t+R)\mathbb{T}_{00}+x\mathbb{T}_{01})dx+((t+R)\mathbb{T}_{01}+x\mathbb{T}_{11})dt=x(\mathbb{T}_{00}-2\mathbb{T}_{01}+\mathbb{T}_{11})dt
=-(t+R)|\Lu\phi|^2dt.
\end{align*}
Then on the constant $t$-slice $\Gamma_3'',\ \Gamma_4''$, we have
\begin{align*}
&((t+R)\mathbb{T}_{00}+x\mathbb{T}_{01})dx+((t+R)\mathbb{T}_{01}+x\mathbb{T}_{11})dt=((t+R)\mathbb{T}_{00}+x\mathbb{T}_{01})dx.
\end{align*}
Therefore the above energy identity reads as
\begin{align*}
  &\int_{-t-R}^{t+R}((t+R)\mathbb{T}_{00}(t,x)+x\mathbb{T}_{01}(t,x))dx\Big|_{t=0}^{t=T}
  \\=&\int_{0}^{T}(t+R)|\L\phi(t,t+R)|^2dt+\int_{0}^{T}(t+R)|\Lu\phi(t,-t-R)|^2dt+\int_{\Sigma_{R}^{T}}\frac{2|\phi(t,x)|^{p+1}}{p+1}dxdt.
\end{align*}
For the first term on the right hand side, we make use of the standard energy conservation \eqref{eq:EC:out}
 adapted to the out going null lines
 to bound that
\begin{align*}
  \int_{0}^{T}(t+R)|\L\phi(t,t+R)|^2dt & \leq (T+R)\int_{0}^{T}|\L\phi(t,t+R)|^2dt\\
  & \leq 2(T+R)\int_{t+R}^{+\infty} \mathbb{T}_{00}(t,x)dx\Big|_{t=T}^{t=0}.
\end{align*}
Similarly or by symmetry, we also have
\begin{align*}
  &\int_{0}^{T}(t+R)|\Lu\phi(t,-t-R)|^2dt\leq 2(T+R)\int_{-\infty}^{-t-R} \mathbb{T}_{00}(t,x)dx\Big|_{t=T}^{t=0}.
\end{align*}
On the other hand, for $|x|\leq t+R$, we can estimate that
\begin{align*}
  &(t+R)\mathbb{T}_{00}+x\mathbb{T}_{01}\geq (t+R)(\mathbb{T}_{00}-|\mathbb{T}_{01}|)\geq(t+R)\frac{|\phi|^{p+1}}{p+1},\\&(t+R)\mathbb{T}_{00}+x\mathbb{T}_{01}\leq 2(t+R)\mathbb{T}_{00}.
\end{align*}
This implies that
\begin{align*}
  &\int_{-t-R}^{t+R}((t+R)\mathbb{T}_{00}(t,x)+x\mathbb{T}_{01}(t,x))dx\Big|_{t=0}^{t=T}\\
  &\geq (T+R)\int_{|x|\leq T+ R}\frac{|\phi(T,x)|^{p+1}}{p+1}dx-2R\int_{|x|\leq R}\mathbb{T}_{00}(0,x)dx.
\end{align*}
Combining the above estimates, we conclude that
\begin{align*}
  &(T+R)\int_{|x|\leq T+ R}\frac{|\phi(T,x)|^{p+1}}{p+1}dx-2R\int_{|x|\leq R}\mathbb{T}_{00}(0,x)dx\\
  &\leq 2(T+R)\int_{|x|\geq t+R}\mathbb{T}_{00}(t,x)dx\Big|_{t=T}^{t=0}+\int_{\Sigma_{R}^{T}}\frac{2|\phi(t,x)|^{p+1}}{p+1}dxdt.
\end{align*}
In particular we derive that
\begin{align*}
  &(T+R)\int_{|x|\leq T+ R}\frac{|\phi(T,x)|^{p+1}}{p+1}dx+2(T+R)\int_{|x|\geq T+R}\mathbb{T}_{00}(T,x)dx\\
  &\leq 2R\int_{|x|\leq R}\mathbb{T}_{00}(0,x)dx+2(T+R)\int_{|x|\geq R}\mathbb{T}_{00}(0,x)dx+\int_{\Sigma_{R}^{T}}\frac{2|\phi(t,x)|^{p+1}}{p+1}dxdt\\
  &= 2R\int_{\R}\mathbb{T}_{00}(0,x)dx+2T\int_{|x|\geq R}\mathbb{T}_{00}(0,x)dx+\int_{\Sigma_{R}^{T}}\frac{2|\phi(t,x)|^{p+1}}{p+1}dxdt.
\end{align*}
Now since
\[
\mathbb{T}_{00}\geq\frac{1}{p+1} |\phi|^{p+1},
\]
the previous estimate leads to
\begin{align*}
  &(T+R)\int_{\R}\frac{|\phi(T,x)|^{p+1}}{p+1}dx\leq R\mathcal{E}_{0}[\phi]+2T\int_{|x|\geq R}\mathbb{T}_{00}(0,x)dx+\int_{\Sigma_{R}^{T}}\frac{2|\phi(t,x)|^{p+1}}{p+1}dxdt.
\end{align*}
Dividing by $T+R$ and taking limit in terms of $T$, we end up with
\begin{align*}
  &\limsup_{T\to+\infty}\int_{\R}{|\phi(T,x)|^{p+1}}dx\leq 2(p+1)\int_{|x|\geq R}\mathbb{T}_{00}(0,x)dx+\limsup_{T\to+\infty}\frac{2}{T}\int_{\Sigma_{R}^{T}}{|\phi(t,x)|^{p+1}}dxdt.
\end{align*}
Now by using Proposition \ref{prop:phi} and the definition of $\Sigma_{R}^{T} $, we have
\begin{align*}
  &\lim_{T\to+\infty}\frac{1}{T}\int_{\Sigma_{R}^{T}}{|\phi(t,x)|^{p+1}}dxdt=\lim_{T\to+\infty}\frac{1}{T} \int_0^T \int_{|x|\leq t+R} |\phi(t,x)|^{p+1}dxdt=0.
\end{align*}
Therefore
\begin{align*}
  &\limsup_{T\to+\infty}\int_{\R}{|\phi(T,x)|^{p+1}}dx\leq 2(p+1)\int_{|x|\geq R}\mathbb{T}_{00}(0,x)dx,\quad \forall\ R>0.
\end{align*}
Since the initial energy is finite
\[
\int_{|x|\geq R}\mathbb{T}_{00}(0, x)dx\leq \int_{\mathbb{R}} \mathbb{T}_{00}(0, x)dx=\frac{1}{2}\mathcal{E}_0[\phi]<\infty,
\]
by letting $R\to+\infty$, we conclude that
\begin{align*}
  &\lim_{T\to+\infty}\int_{\R}{|\phi(T,x)|^{p+1}}dx=0,\quad \text{i.e.}\quad \lim_{t\to+\infty}\|\phi(t)\|_{L_x^{p+1}(\R)}^{p+1}=0.
\end{align*}
Once we have this potential energy decay, the pointwise decay estimate follows from Gagliardo-Nirenberg inequality (see Remark \ref{rem1}) together with the energy conservation
\begin{align*}
  &\|\phi(t)\|_{L_x^{\infty}(\R)}^{p+3}\les \|\partial_x\phi(t)\|_{L_x^{2}(\R)}^{2}\|\phi(t)\|_{L_x^{p+1}(\R)}^{p+1}\les \mathcal{E}_{0}[\phi]\|\phi(t)\|_{L_x^{p+1}(\R)}^{p+1}.
\end{align*}
This in particular implies that
\begin{align*}
  &\lim_{t\to+\infty}\|\phi(t)\|_{L_x^{\infty}(\R)}= \lim_{t\to+\infty}\|\phi(t)\|_{L_x^{p+1}(\R)}=0.
\end{align*}
By symmetry, this limit is also true in the past as $t\to-\infty$. This completes the proof for Theorem \ref{thm:potential:decay}.

\section{Proof for Theorem \ref{thm:polynomial:decay}}

In this section,  we make a convention that $A\les B$ means there is a constant $C$ depending only  on $p$, $\a$, $ \b$ and the weighted energy $\mathcal{E}_{1}[\phi]$ such that $A\leq CB$. Here the constants $\a$, $\b$ verify the assumption in Theorem \ref{thm:polynomial:decay}.

As we have pointed out in the introduction, the improved inverse polynomial decay of the solution follows from weighted energy estimates together with a type of weighted Gagliardo-Nirenberg inequality of Lemma \ref{lem:GN1}. In the exterior region $|x|\leq t$, it suffices to make use of the Lorentz vector field $t\pa_x+x\pa_t$ as multiplier to deduce weighted energy estimates through out going null lines. This, however, works only in the exterior region. In the interior region $\{|x|\leq t\}$, we need to use new weighted vector fields as multipliers to derive the necessary weighted energy estimates through out going and incoming null lines.


\subsection{Decay estimates in the exterior region}
In view of the energy identity \eqref{eq:T00}, we in particular have that
\begin{align*}
  &\partial_t((x+1)\mathbb{T}_{00}+t\mathbb{T}_{01})=\partial_x((x+1)\mathbb{T}_{01}+t\mathbb{T}_{11}).
\end{align*}
Integrate this equality on the domain
\[
\mathcal{D}_{a}^{b}:=\{(t, x):\,t\geq0,\quad t-x\leq -a,\quad t+x\leq b\},\quad a<b.
\]
Using Stokes formula, we obtain that
\begin{align*}
  &\int_{\partial\mathcal{D}_{a}^{b}}[((x+1)\mathbb{T}_{00}+t\mathbb{T}_{01})dx+((x+1)\mathbb{T}_{01}+t\mathbb{T}_{11})dt]=0.
\end{align*}
The boundary $\pa\mathcal{D}_{a}^{b}$ consists of the null segments
$$\Gamma_1=\{t-x=-a,\ 0\leq t\leq \frac{b-a}{2}\},\quad \Gamma_2=\{t+x=b,\ 0\leq t\leq \frac{b-a}{2}\}.$$
 and the constant $t$-slice $\Gamma_3=\{t=0,\ a\leq x\leq b\}$.

 Now we compute the boundary integrals. For the outgoing null segment $\Gamma_1\subset\{t-x=-a\}$, we can compute that
\begin{align*}
&((x+1)\mathbb{T}_{00}+t\mathbb{T}_{01})dx+((x+1)\mathbb{T}_{01}+t\mathbb{T}_{11})dt\\
= &((x+1)\mathbb{T}_{00}+(t+x+1)\mathbb{T}_{01}+t\mathbb{T}_{11})dt\\
=& \f12 \left((t+x+1) |\L\phi|^2+\frac{2(x+1-t)}{p+1}  |\phi|^{p+1} \right)dt\\
=& \f12 \left((2t+a+1) |\L\phi|^2+ \frac{2(a+1)}{p+1}  |\phi|^{p+1} \right)dt.
\end{align*}
Similarly for the incoming null segment $\Gamma_2\subset\{t+x=b\}$, we show that
\begin{align*}
&((x+1)\mathbb{T}_{00}+t\mathbb{T}_{01})dx+((x+1)\mathbb{T}_{01}+t\mathbb{T}_{11})dt\\
=&(-(x+1)\mathbb{T}_{00}+(x+1-t)\mathbb{T}_{01}+t\mathbb{T}_{11})dt\\
=&-\f12 \left((x+1-t) |\Lu\phi|^2+\frac{2(x+1+t)}{p+1} |\phi|^{p+1}\right)dt\\
=&-\f12 \left ((b+1-2t) |\Lu\phi|^2 +\frac{2(b+1)}{p+1}  |\phi|^{p+1}\right)dt.
\end{align*}
Finally on the constant $t$-slice $\Gamma_3\subset\{t=0\}$, we have
\begin{align*}
&((x+1)\mathbb{T}_{00}+t\mathbb{T}_{01})dx+((x+1)\mathbb{T}_{01}+t\mathbb{T}_{11})dt=(x+1)\mathbb{T}_{00}dx.
\end{align*}
Summing up and considering the orientation, we obtain that
\begin{align*}
  &\int_0^{\frac{b-a}{2}}\left(\frac{2(a+1)|\phi|^{p+1}}{p+1}+(2t+a+1) |\L\phi|^2 \right)|_{x=t+a}dt
  \\&+\int_0^{\frac{b-a}{2}} \left(\frac{2(b+1)|\phi|^{p+1}}{p+1}+(b+1-2t) |\Lu\phi|^2\right)|_{x=b-t}dt\\=&
  2\int_{a}^b(x+1)\mathbb{T}_{00}(0,x)dx\leq  2\int_{\R}(|x|+1)\mathbb{T}_{00}(0,x)dx=\mathcal{E}_{1}[\phi].
\end{align*}
If $a\geq 0$ then each term on the left hand side is nonnegative. By letting $b\to+\infty$, we conclude that
\begin{align}
\label{eq:a>0}
  &\int_0^{+\infty}(\frac{2(a+1)}{p+1}|\phi|^{p+1}+(2t+a+1)|\L\phi|^2)|_{x=t+a}dt\leq \mathcal{E}_{1}[\phi],\quad \forall a\geq 0.
\end{align}
By symmetry (changing $x$ to $-x$), we also have
\begin{align}
\label{eq:a<0}
  &\int_0^{+\infty}(\frac{2(a+1)}{p+1}|\phi|^{p+1}+(2t+a+1)|\Lu\phi|^2)|_{x=-a-t}dt\leq \mathcal{E}_{1}[\phi],\quad \forall a\geq 0.
\end{align}
Alternatively one can also subtract the previous energy identity with four times of the standard energy (obtained by using $\pa_t$ as multiplier) and then let $b\leq 0$ and $a\rightarrow -\infty$.

Now for fixed $a\geq 0$,  define $g(t)=\phi(t,t+a)$, $a_1=0$, $a_2=a+1$, $\mu_1=1$,  $\mu_2=0$. We in particular have that $g'(t)=\L\phi(t,t+a)$. Then in view of the above estimate \eqref{eq:a>0}, we derive that
\begin{align*}
  &(a+1)\||g|^{p+1}\|_{L^1([0,+\infty))}+\|(t+a_2)|g'|^{2}\|_{L^1([0,+\infty))}\leq \frac{p+1}{2}\mathcal{E}_{1}[\phi].
\end{align*}
By Lemma \ref{lem:GN1}, we conclude that
\begin{align*}
  &(t+a_2)^{\frac{1}{2}}|g(t)|^{\frac{p+3}{2}}
\leq C\||g|^{p+1}\|_{L^1([0,+\infty))}^{1/2}\|(t+a_2)|g'|^{2}\|_{L^1([0,+\infty))}^{1/2}\les (a+1)^{-\frac{1}{2}}.
\end{align*}
This in particular leads to the pointwise decay estimate
\begin{align*}
&|\phi(t,t+a)|
\les  (t+a+1)^{-\frac{1}{p+3}}(a+1)^{-\frac{1}{p+3}},\quad \forall\ t\geq 0,\ a\geq 0,
\end{align*}
which implies the first inequality of Theorem \ref{thm:polynomial:decay} for the case $x\geq t\geq 0.$ By symmetry (or changing variables $x\to -x$) the first inequality holds on the whole exterior region $|x|\geq |t|\geq 0.$

\subsection{Decay estimates in the interior region}
The decay estimates in the interior region are much more difficult to obtain. We introduce new vector fields as multipliers.

Now for $u=t+1-x>0$, $v=t+1+x>0$ and constants $\a$, $\b$ verifying the assumptions in Theorem \ref{thm:polynomial:decay}, in view of the identity \eqref{eq:L}, we can compute that
\begin{align*}
  \L(u^{\b}v^{\a-1}(\Lu\phi)^2)+ \frac{2}{p+1}\Lu (u^{\b}v^{\a-1}|\phi|^{p+1}) &=\L(u^{\b}v^{\a-1})(\Lu\phi)^2+ \frac{2}{p+1}\Lu(u^{\b}v^{\a-1})  |\phi|^{p+1}
  \\
  &=2(\a-1)u^{\b}v^{\a-2}(\Lu\phi)^2+ \frac{4}{p+1} \b u^{\b-1}v^{\a-1}  |\phi|^{p+1}.
\end{align*}
Similarly (from \eqref{eq:Lu}) we also have
\begin{align*}
&\Lu(u^{\b-1}v^{\a }(\L\phi)^2)+ \frac{2}{p+1}\L  ( u^{\b-1}v^{\a}|\phi|^{p+1} )
=2(\b-1)u^{\b-2}v^{\a}(\L\phi)^2+\frac{4}{p+1} \a u^{\b-1}v^{\a-1}  |\phi|^{p+1}.
\end{align*}
Introduce the following quantities
\begin{align*}
&P_1=\frac{u^{\b}v^{\a-1}(\Lu\phi)^2}{\b}+\frac{2u^{\b-1}v^{\a}|\phi|^{p+1}}{\a(p+1)},\quad
P_2=\frac{u^{\b-1}v^{\a}(\L\phi)^2}{\a}+\frac{2u^{\b}v^{\a-1}|\phi|^{p+1}}{\b(p+1)}.
\end{align*}
Here recall that $\a$, $\b$ are positive constants.
Combining the above two identities, we end up with
\begin{align*}
&\L P_1+\Lu P_2-\frac{4}{p+1} u^{\b-1}v^{\a-1} Q \\
&= \frac{ 2(\b-1)}{\a}u^{\b-2}v^{\a}(\L\phi)^2+\frac{2(\a-1)}{\b}u^{\b}v^{\a-2}(\Lu\phi)^2+\frac{8}{p+1} u^{\b-1}v^{\a-1} (\L\phi)(\Lu\phi)
\\
&=-2u^{\b-2}v^{\a-2}\big|\sqrt{{\a}^{-1}(1-\b)}\cdot v\L\phi-\sqrt{\b^{-1}(1-\a) }\cdot u\Lu\phi\big|^2\\
& \leq  0.{}
\end{align*}
Here we used the relation
\begin{align*}
{\a}^{-1}(1-\a){\b}^{-1}(1-\b) =\frac{4}{(p+1)^2},\quad 0<\a, \b<1
\end{align*}
assumed in the theorem and $Q$ is defined in \eqref{defQ}.
 Integrate the above inequality on the domain
\[
\mathcal{R}_{a}^{b}:=\{(t, x):\,0\leq t-x\leq a,\quad 0\leq t+x\leq b\},\quad a>0,\ b>0.
\]
By using Stokes formula, we obtain that
\begin{align}
\nonumber
  \frac{4}{p+1}\int_{\mathcal{R}_{a}^{b}}u^{\b-1}v^{\a-1} Q dxdt&\geq\int_{\mathcal{R}_{a}^{b}}(\L P_1
+\Lu P_2)dxdt=\int_{\mathcal{R}_{a}^{b}}(\partial_t(P_1+P_2)+\partial_x(P_1-P_2)
)dxdt\\
\label{eq:QP1P2}
&=\int_{\partial\mathcal{R}_{a}^{b}}((P_1-P_2)dt-(P_1+P_2)dx).
\end{align}
Recall that  $Q=\Box \phi^2$. By using Stokes formula again, we can compute that
\begin{align}
\notag
  \int_{\mathcal{R}_{a}^{b}}u^{\b-1}v^{\a-1}Qdxdt&=\int_{\mathcal{R}_{a}^{b}}u^{\b-1}v^{\a-1}\Box \phi^2dxdt\\
  \label{eq:Rabuv}
  &=\int_{\mathcal{R}_{a}^{b}}\phi^2\Box(u^{\b-1}v^{\a-1}) dxdt+\int_{\partial\mathcal{R}_{a}^{b}}\omega,
  \end{align}
  in which
  \begin{align*}
 \omega=(u^{\b-1}v^{\a-1}\partial_x (\phi^2)-\phi^2\partial_x(u^{\b-1}v^{\a-1}))dt+(u^{\b-1}v^{\a-1}\partial_t (\phi^2)-\phi^2\partial_t(u^{\b-1}v^{\a-1}))dx.
\end{align*}
Now note that
$$\Box(u^{\b-1}v^{\a-1})=-\L\Lu(u^{\b-1}v^{\a-1})=-4(\b-1)(\a-1)u^{\b-2}v^{\a-2}<0$$
by the assumption that $0<\a, \b<1$. In particular this implies that
\begin{align}
\label{eq:Rab}
  &\int_{\mathcal{R}_{a}^{b}}\phi^2\Box(u^{\b-1}v^{\a-1}) dxdt\leq 0.
\end{align}
We now compute that boundary terms.
The boundary $\pa\mathcal{R}_{a}^{b}$ of the domain consists of the four null segments
\begin{align*}
\Gamma_1' &=\{t-x=0,\ 0\leq t\leq \frac{b}{2}\},\quad \Gamma_2'=\{t+x=b,\ \frac{b}{2}\leq t\leq \frac{b+a}{2}\},\\
 \Gamma_3'&=\{t-x=a,\ \frac{a}{2}\leq t\leq \frac{b+a}{2}\},\quad  \Gamma_4'=\{t+x=0,\ 0\leq t\leq \frac{a}{2}\}.
\end{align*}
 For the null segment $\Gamma_3'\subset\{t=x+a\}$, we can compute that
\begin{align*}
\omega=&(u^{\b-1}v^{\a-1}(\partial_x+\partial_t) (\phi^2)-\phi^2(\partial_x+\partial_t)(u^{\b-1}v^{\a-1}))dt=(g_1' g_2-g_1g_2')dt.
\end{align*}
Here we denote
\begin{align*}
&g_1(t) =\phi^2(t,t-a),\quad g_2(t):=(u^{\b-1}v^{\a-1})(t, t-a) =(a+1)^{\b-1}(2t-a+1)^{\a-1}.
\end{align*}
Then its integration on $\Gamma_1' $ becomes
\begin{align*}
&\int_{\Gamma_3'}\omega=\int_{\frac{a}{2}}^{\frac{a+b}{2}}(g_1'g_2-g_1g_2')dt=(g_1g_2)|_{t=\frac{a}{2}}^{t=\frac{a+b}{2}}-
2\int_{\frac{a}{2}}^{\frac{a+b}{2}}g_1g_2'dt.
\end{align*}
Since $0<\a, \b<1$, $a>0$ and $t\geq \frac{a}{2}$, we have
$$0<g_2\leq 1,\quad  g_2'<0.$$
Then by using the uniform bound \eqref{eq:unform:bd:phi} for finite energy solution,
we can bound that
\begin{align*}
&\left|\int_{\Gamma_3'}\omega\right|\les 1+\int_{\frac{a}{2}}^{\frac{a+b}{2}}|g_2'|dt\les 1.
\end{align*}
By our notation, the implicit constant relies only on the weighted energy $\mathcal{E}_1[\phi]$, the power $p$ and the constants $\a$, $\b$. In particular, the above uniform bound is independent of $a$, $b$. By setting $a=0$, we also have the uniform bound for the integration of $\omega$ on the null segment $\Gamma_1'$. In a similar argument (or by symmetry $x\rightarrow -x$), the integration of $ \omega$ on other incoming null segments $\Gamma_2'$, $\Gamma_4' $ is also uniformly bounded by a constant independent of $a$ and $b$,
that is,
\begin{align*}
\left| \int_{\partial\mathcal{R}_{a}^{b}}\omega\right|\les 1.
\end{align*}
Combining this estimate with \eqref{eq:Rabuv}, \eqref{eq:Rab} and \eqref{eq:QP1P2}, we conclude that
\begin{align*}
  &\int_{\mathcal{R}_{a}^{b}}u^{\b-1}v^{\a-1}Qdxdt\les 1,\quad \int_{\partial\mathcal{R}_{a}^{b}}((P_1-P_2)dt-(P_1+P_2)dx)\les 1.
\end{align*}
Now on the outgoing null segments $\Gamma_1'$ and $\Gamma_3'$, we compute that
\begin{align*}
dx=dt,\quad (P_1-P_2)dt-(P_1+P_2)dx=-2P_2dt,
\end{align*}
while on the incoming null segments $\Gamma_2'$ and $ \Gamma_4'$, we have
\begin{align*}
dx=-dt,\quad (P_1-P_2)dt-(P_1+P_2)dx=2P_1dt.
\end{align*}
Summing up and considering the orientation we have shown that
\begin{align}
\notag
  &-\int_0^{\frac{b}{2}}2P_2(t,t)dt+\int_{\frac{b}{2}}^{\frac{b+a}{2}}2P_1(t,b-t)dt
  +\int_{\frac{a}{2}}^{\frac{b+a}{2}}2P_2(t,t-a)dt-\int_0^{\frac{a}{2}}2P_1(t,-t)dt\\
  \label{eq:w:ei}
  &=\int_{\partial\mathcal{R}_{a}^{b}}((P_1-P_2)dt-(P_1+P_2)dx)\\
  \notag
  &\les 1.
\end{align}
Note that for $t\geq 0$, we have $(u)(t, t)=1$ and $(v)(t, t)=2t+1\geq 1$. Hence we have
\begin{align*}
  P_2(t,t)&=\Big(\frac{u^{\b-1}v^{\a}(\L\phi)^2}{\a}+\frac{2u^{\b}v^{\a-1}|\phi|^{p+1}}{\b(p+1)}\Big)\Big|_{x=t}\\
  &=\Big(\frac{v^{\a}(\L\phi)^2}{\a}+\frac{2v^{\a-1}|\phi|^{p+1}}{\b(p+1)}\Big)\Big|_{x=t}\\
  &\les (2t+1)(\L\phi)^2(t, t)+|\phi|^{p+1}(t, t).
\end{align*}
In view of estimate \eqref{eq:a>0} with $a=0$, we derive that
\begin{align*}
  &\int_0^{\frac{b}{2}}2P_2(t,t)dt\les \int_0^{+\infty}({(2t+1)(\L\phi)^2}+{|\phi|^{p+1}})|_{x=t}dt\les 1.
\end{align*}
Similarly by using estimate \eqref{eq:a<0} with $a=0$ (or simply by symmetry again), we also have
\begin{align*}
  &\int_0^{\frac{a}{2}}2P_1(t,-t)dt\les 1.
\end{align*}
Then from the above inequality \eqref{eq:w:ei}, we conclude that
\begin{align*}
  &\int_{\frac{b}{2}}^{\frac{b+a}{2}}2P_1(t,b-t)dt
  +\int_{\frac{a}{2}}^{\frac{b+a}{2}}2P_2(t,t-a)dt\les 1,\quad \forall a, b>0.
\end{align*}
Since $P_1$ and $P_2$ are non-negative, by letting $b\to+\infty$, we derive that
\begin{align*}
  &\int_{\frac{a}{2}}^{+\infty}2P_2(t,t-a)dt\les 1.
\end{align*}
Note that for $t\geq a>a/2>0$ it holds that
\[
(u)(t, t-a)=a+1,\quad t+1\leq (v)(t, t-a)=2t-a+1\leq  2t+1.
\]
This implies that for $t\geq a$ we have the lower bound
\begin{align*}
  P_2(t,t-a)&=\Big(\frac{u^{\b-1}v^{\a}(\L\phi)^2}{\a}+\frac{2u^{\b}v^{\a-1}|\phi|^{p+1}}{\b(p+1)}\Big)\Big|_{x=t-a}
  \\ & \geq \a^{-1}(a+1)^{\b-1}(t+1)^{\a}(\L\phi)^2+2(p+1)^{-1}\b^{-1}(a+1)^{\b}(2t+1)^{\a-1}|\phi|^{p+1}.
\end{align*}
Here we note that $0<\a<1$.
By restricting the integral on the region where $t\geq a$, we derive from the previous estimate that
\begin{align*}
  &\int_{a}^{+\infty}((a+1)^{\b-1}(t+1)^{\a}(\L\phi)^2+(a+1)^{\b}(t+1)^{\a-1}|\phi|^{p+1})|_{x=t-a}dt\les 1.
\end{align*}
Now for fixed $a> 0$,  let
 $$g(t)=\phi(t,t-a), \quad a_1=a,\quad a_2=1,\quad \mu_1=\a \in[1/2,1),\quad \mu_2=\a-1. $$
In particular we have
$$ \mu_1+\mu_2=2\a-1\geq 0, \quad g'(t)=(\L\phi)(t,t-a).$$
Therefore the above weighted energy estimate through the outgoing null lines reads as
\begin{align*}
(a_1+1)^{\b-1} \int_{a_1}^{+\infty}(t+a_2)^{\mu_1}|g'(t)|^2dt+(a_1+1)^{\b}\int_{a_1}^{\infty} (t+a_2)^{\mu_2}|g|^{p+1} dt\les 1.
\end{align*}
Then using Lemma \ref{lem:GN1}, we conclude that
\begin{align*}
  (t+a_2)^{\mu_1+\mu_2} |g(t)|^{p+3}
\leq C  \int_{a_1}^{\infty} (s+a_2)^{\mu_2} |g|^{p+1} ds\cdot \int_{a_1}^{\infty}(s+a_2)^{\mu_1}|g'|^{2}ds \les (a_1+1)^{1-2\b}.
\end{align*}
By definition, this implies that
\begin{align*}
|\phi(t,t-a)|
\les (t+1)^{\frac{1-2\a}{p+3}}(a+1)^{\frac{1-2\b}{p+3}}, \quad \forall t\geq a> 0.
\end{align*}
Hence the pointwise decay estimate of Theorem \ref{thm:polynomial:decay} in the interior region holds for the case when $t\geq x\geq 0.$ By symmetry (changing $x$ to $-x$) again, the same estimate is valid for $-t\leq x\leq 0.$

Finally to show the uniform inverse polynomial time decay of the solution, in the exterior region where $t\leq |x|$, the decay estimate in the exterior region in the previous subsection implies that
\begin{align*}
  &|\phi(t,x)|\les (t+1)^{-\frac{1}{p+3}}
  \les (1+|t|)^{-\frac{p-1}{(p+1)^2+4}}.
\end{align*}
In the interior region $|x|\leq t$, take the constants $\a$, $\b$ as follows
\[
\b=\frac{1}{2},\quad \a^{-1}=1+\frac{4}{(p+1)^2}<2.
\]
This verifies the condition in Theorem \ref{thm:polynomial:decay}. Then the pointwise decay estimate in the interior region shows that
\begin{align*}
|\phi(t,x)|
\les (t+1)^{\frac{1-2\a}{p+3}}=(1+t)^{-\frac{p-1}{(p+1)^2+4}}.
\end{align*}
Combining this with the above estimate in the exterior region, we therefore have shown that on the whole future spacetime, it always holds that
\begin{align*}
|\phi(t,x)|
\les (t+1)^{\frac{1-2\a}{p+3}}=(1+t)^{-\frac{p-1}{(p+1)^2+4}}.
\end{align*}
This completes the proof for Theorem \ref{thm:polynomial:decay}.

\bibliography{shiwu}{}
\bibliographystyle{plain}


\end{document}